\definecolor{myred}{RGB}{255, 61, 65}
\newcommand*{\longeq}{\ratio\Longleftrightarrow}
\tikzstyle{startstop} = [rectangle, rounded corners, minimum width=3cm, minimum height=1cm,text centered, draw=black, fill=brown!30]
\tikzstyle{decision} = [diamond, aspect=1.5, inner xsep=0pt, text centered, draw=black, fill=lime!30, text width=1.8cm]
\tikzstyle{processyes} = [rectangle, minimum width=3cm, minimum height=1cm, text centered, draw=black, fill=green!30]
\tikzstyle{processno} = [rectangle, minimum width=3cm, minimum height=1cm, text centered, draw=black, fill=red!30]
\tikzstyle{arrow} = [thick,->,>=stealth]
\newcommand{\BeHe}[6]{
\resizebox{\textwidth}{!}{\begin{tikzpicture}[scale=1/#1*15,cap=round,>=latex]
\pgfmathparse{0};
\edef\shiftvar{\pgfmathresult};

\foreach \n in {1,...,#1}{

    \pgfmathparse{gcd(\n,#1)};
    
    \ifthenelse{1=\pgfmathresult}{
    \draw[thick,xshift=3*\shiftvar cm] (0cm,0cm) circle(1cm);
    \filldraw[myred,xshift=3*\shiftvar cm] (\n*#2*360:1cm) circle(3pt);
    \filldraw[myred,xshift=3*\shiftvar cm] (\n*#3*360:1cm) circle(3pt);
    \filldraw[myred,xshift=3*\shiftvar cm] (\n*#4*360:1cm) circle(3pt);
    \filldraw[blue,xshift=3*\shiftvar cm] (\n*#5*360:1cm) circle(3pt);
    \filldraw[blue,xshift=3*\shiftvar cm] (\n*#6*360:1cm) circle(3pt);
    \filldraw[blue,xshift=3*\shiftvar cm] (\n*1*360:1cm) circle(3pt);
    \pgfmathparse{\shiftvar+1} \xdef\shiftvar{\pgfmathresult}
    }{};
}
\end{tikzpicture}
}
}
\renewcommand{\phi}{\varphi}
\renewcommand{\epsilon}{\varepsilon}
\newcommand{\N}{\mathbb{N}}
\newcommand{\R}{\mathbb{R}}
\newcommand{\C}{\mathbb{C}}
\newcommand{\Z}{\mathbb{Z}}
\newcommand{\Q}{\mathbb{Q}}
\newcommand{\F}{\mathcal{F}}
\newcommand{\ps}[1]{[\![#1]\!]}
\newcommand{\dpc}[1]{\langle #1 \rangle}
\newcommand{\cc}{\mathrm{c}}
\newcommand{\rr}{\mathrm{r}}
\theoremstyle{definition}
\newtheorem{defi}{Definition}[section]
\newtheorem{ex}[defi]{Example}
\theoremstyle{plain}
\newtheorem{thm}[defi]{Theorem}
\newtheorem{lem}[defi]{Lemma}
\newtheorem{cor}[defi]{Corollary}
\newtheorem{prop}[defi]{Proposition}
\theoremstyle{remark}
\newtheorem{rem}[defi]{Remark}
\newcommand*\pFqskip{8mu}
\newcommand*\pFq{\begingroup
        \catcode`\,\active
        \def ,{\mskip\pFqskip\relax}%
        \dopFq
}
\def\dopFq#1#2#3#4#5{%
        {}_{#1}F_{#2}\biggl[\genfrac..{0pt}{}{#3}{#4};#5\biggr]%
        \endgroup
}
\newcommand*\bigF{\begingroup
        \catcode`\,\active
        \def ,{\mskip\pFqskip\relax}%
        \dobigF
}
\def\dobigF#1#2#3{%
        {\F}\biggl[\genfrac..{0pt}{}{#1}{#2};#3\biggr]%
        \endgroup
}
\begin{document}
\title{Algebraicity of hypergeometric functions\\ with arbitrary parameters}
\author{Florian Fürnsinn, Sergey Yurkevich}

\maketitle
\begin{abstract}
    We provide a complete classification of the algebraicity of (generalized) hypergeometric functions with no restriction on the set of their parameters. Our characterization relies on the interlacing criteria of Christol (1987) and Beukers-Heckman~(1989) for globally bounded and algebraic hypergeometric functions, however in a more general setting which allows arbitrary complex parameters with possibly integral differences. We also showcase the adapted criterion on a variety of different examples.
    \let\thefootnote\relax\footnote{
    \textit{MSC2020:} \href{https://mathscinet.ams.org/mathscinet/msc/msc2020.html?t=33C20}{33C20}, \href{https://mathscinet.ams.org/mathscinet/msc/msc2020.html?t=34M15}{34M15} (primary), \href{https://mathscinet.ams.org/mathscinet/msc/msc2020.html?t=05A15}{05A15} (secondary) \\    
    \textit{Keywords:} D-finite functions, hypergeometric functions, linear ordinary differential equations, algebraic functions, differential operators.}
\addtocounter{footnote}{-1}\let\thefootnote\svthefootnote
\end{abstract}

\section{Introduction} \label{sec:intro}

The \emph{hypergeometric function} with parameters $a_1,\dots,a_p,\allowbreak b_1,\dots,b_q \in \C$ for some $p,q\in \N$ and $b_k \not \in -\N$ for $k=1,\ldots, q$ is defined as the power series
\begin{equation}\label{def:hyper}
\pFq{p}{q}{a_1,\ldots, a_p}{b_1,\ldots, b_q}{x}={}_pF_q([a_1,\ldots, a_p],[b_1,\ldots, b_{q}];x) \coloneqq\sum_{n=0}^\infty \frac{(a_1)_n\cdots (a_p)_n}{(b_1)_n\cdots (b_q)_n}  \frac{x^n}{ n!},
\end{equation}
where $(a)_n\coloneqq a\cdot (a+1)\cdots (a+n-1)$ denotes the rising factorial. In this text we always assume that ${}_pF_q([a_1,\ldots, a_p],[b_1,\ldots, b_{q}];x)\in \Q\ps{x}$, i.e., the power series expansion has rational coefficients. Hypergeometric functions are exactly the generating functions of first order P-recursive sequences. More precisely, a sequence of rational numbers $(u_n)_{n \geq 0}$ is called \textit{hypergeometric} if it satisfies a linear recurrence relation with polynomial coefficients of order $1$, i.e., there exist polynomials $A(t), B(t)\in \Q[t]$ with $B(-n) \neq 0$ for $n \in \N$, s.t.
\[
u_{n+1} = \frac{A(n)}{B(n)} u_n, \text{ for all } n \geq 0.
\]
The class of hypergeometric sequences includes sequences with finite support, the geometric sequence $\alpha^n$ for $\alpha\in \Q$, the Catalan numbers (\href{https://oeis.org/A000108}{OEIS A000108}), the coefficient sequence of elliptic integrals (\href{https://oeis.org/A002894}{OEIS A002894}), or the Chebyshev numbers (\href{https://oeis.org/A211417}{OEIS A211417}) to name a few prominent examples. If $(u_n)_{n \geq 0}$ is a hypergeometric sequence, write $A(t)=\alpha\prod_{j=1}^p(t+a_j)$ and $B(t)=\beta\prod_{k=1}^q(t+b_k)$ for some $a_i,b_j\in \overline{\Q}$. Then its generating function $\sum_{n\geq 0} u_n x^n$ is given by 
\begin{equation*}
u_0\cdot\pFq{p}{q}{a_1,\ldots, a_p, 1}{b_1,\ldots, b_q}{y}, \text{ where } y = \frac{\alpha}{\beta} x.
\end{equation*}
 Conversely, the coefficient sequence of a hypergeometric function clearly forms a hypergeometric sequence.

A power series $f(x)\in \Q\ps{x}$ is called {\it algebraic} (over $\Q(x)$) if there exists a bivariate polynomial $P(x,y) \in \Q[x,y] \setminus \{ 0 \}$, such that $P(x,f(x))=0$. In this paper we give the complete answer to the following question (see \cref{thm:tree} and Figure~\ref{fig:tree}):

\begin{quote}
    \emph{Is a given hypergeometric function $f(x)\in \Q \ps{x}$ algebraic over $\Q(x)$?}
\end{quote}

This is not only an interesting question with a surprisingly beautiful answer, but also of significance throughout different fields of mathematics. Its answer is usually attributed to Beukers and Heckman~\cite[Thm.~4.8]{BH89} who solved the problem essentially. They provide an elegant algorithm, the so-called interlacing criterion (see Theorem~\ref{thm:BeHe}), which applies in the case when $p=q+1$ and $a_j,b_k \in \Q$ with $a_j, a_j-b_k\not \in \Z$ for all $j=1,\dots,p$ and $k=1,\dots,p-1$. A few years before the work by Beukers and Heckman, Christol proved in~\cite[\S VI]{Christol86} that the same criterion is necessary and that it is also sufficient under the assumption of Grothendieck's $p$-curvature conjecture. In fact, Christol's refined criterion does not need the assumption $a_j, a_j-b_k\not \in \Z$. We provide more historical details in \cref{sec:history} where we also emphasize on the contributions of Schwarz, Landau and Errera who answered the question completely for ${}_{2}F_1$'s. 


Both of the assumptions in~\cite{BH89} cannot be made without loss of generality: there exist algebraic hypergeometric series in $\Q\ps{x}$ with integer differences between top and bottom parameters or irrational parameters. For example,
\begin{equation} \label{eq:hyperintro}
\pFq{3}{2}{1/2,,\sqrt{2}+1,,-\sqrt{2}+1}{\sqrt{2},,-\sqrt{2}}{4x} = \frac{(7x - 1)(2x - 1)}{(1-4x)^{5/2}} = 1+x-6x^2 + \cdots\in \Z\ps{x},
\end{equation}
is clearly algebraic (see \cref{ex:first} for more details). This series is the generating function of the innocent-looking sequence $(u_n)_{n \geq 0}$ given by 
\[
(n+1)(n^2-2) u_{n+1} = 2(2n+1)(n^2+2n-1) u_n, \quad u_0=1.
\]
Another example (see also \cref{ex:log}) is given by the two similar-looking hypergeometric functions 
\[
    f(x) = \pFq{2}{1}{1,1}{2}{x} \quad \text{ and } \quad g(x) = \pFq{2}{1}{2,2}{1}{x}.
\]
It is not difficult to see that $f(x) = -\log(1-x)/x$ and $g(x) = (1+x)/(1-x)^3$, hence $f(x)$ is clearly transcendental and $g(x)$ is trivially algebraic. However, the interlacing criterion of Beukers and Heckman can neither be applied  to $f(x)$ nor to $g(x)$, since both functions have top and bottom parameters with integral difference. From the viewpoint of our main \cref{thm:tree} the relevant distinction between $f(x)$ and $g(x)$ is that $f(x)$ is already \emph{contracted} but not \emph{reduced} (both terms to be defined in \cref{sec:result}), while the \emph{contraction} of $g(x)$ is given by the \emph{reduced} function $g^{\cc}(x) = {}_{1}F_{0}([1],[\;];x) = (1-x)^{-1}$.

Like in the examples above, one approach to overcome the hindrances on concrete examples is to use some of the many classical transformation formulas for hypergeometric functions and translate the problem to other functions, for which the criterion of Beukers and Heckman applies. In this paper we describe an algorithmic criterion that reduces the question of algebraicity for any hypergeometric function with complex parameters to the criterion of Beukers and Heckman, making these ad-hoc methods obsolete. Also, we will make use of a criterion in similar fashion, \cite[Prop.~1]{Christol86}, which essentially characterizes integer hypergeometric sequences (see Theorem \ref{thm:Christol}). \\

In \cref{sec:history} we give a detailed historical overview on the various contributions to the question of algebraicity of hypergeometric functions. Then, we will present the complete algorithm in Section~\ref{sec:result} and its proof thereafter, in Section~\ref{sec:proof}. Finally we provide several illustrating examples in Section~\ref{sec:examples}.

\paragraph{Acknowledgements.} We are deeply indebted to Alin Bostan for his support, various illuminating discussions not only regarding algebraic hypergeometric functions, and several crucial references which he pointed out to us. Further, we would like to express our gratitude for his numerous comments on an earlier version of this text. We also thank Herwig Hauser, Florian Lang and Stephan Schneider for many discussions and suggestions concerning this work. Moreover, we are very grateful to the anonymous reviewers for their comments and constructive suggestions that significantly improved our exposition (specifically the formulation of the main 
\cref{thm:tree} and the proofs of \cref{lem:span}, \cref{cor:falgfcalg} were greatly improved thanks to them). Finally, we want to thank Juan Arias de Reyna for writing the blog entry \href{https://institucional.us.es/blogimus/en/2024/01/flowers-of-the-hypergeometric-garden/}{Flowers of the hypergeometric garden} on the IMUS Blog of the University of Sevilla about the preprint version of this paper.

The authors were funded by the project P34765 of the \href{https://www.fwf.ac.at/en}{Austrian science fund FWF} and the second author was supported by the ANR-19-CE40-0018 \href{https://specfun.inria.fr/chyzak/DeRerumNatura/}{De Rerum Natura} as well as \href{https://oead.at/en/}{WTZ collaboration}/\href{https://www.campusfrance.org/}{Amadeus project} FR-09/2021 (46411YJ). The second author also thanks \href{https://www.artech.at/}{A\&R TECH} for (financial) support. 

\section{Previous work} \label{sec:history}

In the case of \emph{Gaussian hypergeometric functions}, i.e., if $p=2$ and $q = 1$ in~(\ref{def:hyper}) and $F(x) = {}_{2}F_{1}([\alpha,\beta],[\gamma];x)$ for some $\alpha,\beta,\gamma \in \Q$, the algebraicity was first classified by Schwarz~\cite{Schwarz73} as early as in 1873. A detailed explanation of Schwarz' method and result can be found in Klein's lecture notes~\cite[\S 57]{Klein81}, the book by Poole~\cite[\S VII]{Poole60} or lecture notes by Matsuda~\cite[\S I]{Matsuda85}. In order to apply Schwarz' classification, one first defines the values $\lambda,\mu,\nu$ as $1-\gamma,\gamma-\alpha-\beta,\beta-\alpha$ up to permutations, sign changes of some of the values, and addition of $(\ell,m,n)\in\Z^3$ such that $\ell+m+n$ is even. Then, assuming that $\lambda,\mu,\nu \not \in \Z$, it holds that $F(x)$ is algebraic if and only if $\lambda,\mu,\nu$ appears in Schwarz' list~\cite[p.323]{Schwarz73}. 
\begin{ex} \label{ex:Gessel0}
    Consider the function $f(x) = {}_{2}F_{1}([-1/2, -1/6],[2/3];x)$ which is directly related to Gessel excursions (see \cref{ex:Gessel1}). Then $(\lambda,\mu,\nu) = (1/3,4/3,1/3)$. Note that neither this triple nor any permutation of $(\pm \lambda, \pm \mu,\pm \nu)$ is in Schwarz' list. However, a permutation of $(\lambda,\mu-1,-(\nu-1)) = (1/3,1/3,2/3)$ corresponds to case III of that table, thus $f(x)$ is algebraic. 
\end{ex}

We remark that the 12 cases $1^a$--$4^c$ in \S I of Schwarz' work deal with the non-trivial ``reducible'' case: if one of the values $\alpha-\gamma,\beta-\gamma,\alpha,\beta$ is an integer, or equivalently, if the corresponding hypergeometric differential operator factors as the product of two first-order operators. We obtain these cases in \cref{cor:2F1list} below.

On the other hand, there is a very interesting and instructive arithmetic approach by Landau for the classification of algebraic Gaussian hypergeometric functions in~\cite{Landau04, Landau11}. Landau exploits Eisenstein's theorem (stated in \cite{Eisenstein52} by Eisenstein and first proved by Heine in~\cite{Heine1853, Heine1854}):
\begin{thm}[Eisenstein's theorem] \label{thm:Eisenstein}
    If $f(x) \in \Q\ps{x}$ is algebraic then there exists a non-zero integer $M \in \Z$ such that $f(Mx) - f(0) \in \Z\ps{x}$.
\end{thm}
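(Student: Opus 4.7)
The plan is to reinterpret the conclusion $p$-adically: finding $M$ with $f(Mx) - f(0) \in \Z\ps{x}$ amounts to showing that, for every prime $p$, the Taylor coefficients $c_n = [x^n] f(x)$ satisfy $v_p(c_n) \geq -C_p \cdot n$ for some $C_p \geq 0$, with $C_p = 0$ for all but finitely many $p$; then $M := \prod_p p^{C_p}$ does the job. First I would reduce to $f(0)=0$ by replacing $f$ with $f - f(0)$ (still algebraic), and clear denominators in the minimal polynomial of $f$ over $\Q(x)$ to obtain a polynomial $P(x, y) \in \Z[x, y]$, irreducible over $\Q[x, y]$ (and hence, by Gauss's lemma, over $\Q(x)[y]$), satisfying $P(x, f(x)) = 0$ and $P(0, 0) = 0$.

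The central algebraic input is that, since $P$ is irreducible and $\partial_y P \not\equiv 0$, a Bézout identity in $\Q(x)[y]$ followed by clearing denominators yields $U, V \in \Z[x, y]$ and a non-zero $R(x) \in \Z[x]$ with $UP + V \partial_y P = R(x)$. Combined with implicit differentiation $\partial_x P(x, f(x)) + \partial_y P(x, f(x)) f'(x) = 0$, substituting $y = f(x)$ produces a first-order ODE $R(x) f'(x) = S(x, f(x))$ with $S \in \Z[x, y]$. For any prime $p$ dividing neither the leading coefficient of $P$ in $y$, nor $\partial_y P(0, 0)$, nor $R(0)$, the hypotheses of Hensel's lemma in $\Z_p\ps{x}[y]$ at $(x, y) = (0, 0)$ are met, producing a unique power series root of $P(x, \cdot)$ in $\Z_p\ps{x}$ vanishing at $0$; by uniqueness of implicit functions over $\Q_p\ps{x}$ this root must equal $f$, so $f \in \Z_p\ps{x}$ and $C_p = 0$ for cofinitely many $p$.

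It remains to handle the finitely many bad primes, for which I would perform a Newton polygon analysis of $P$ at the origin (equivalently, invoke a $p$-adic Weierstrass preparation theorem) to obtain a positive lower bound on the $p$-adic radius of convergence of $f$ and hence extract a valid $C_p$. The main obstacle is precisely this degenerate regime, where $\partial_y P(0, 0)$ or $R(0)$ vanishes and Hensel's lemma fails directly; it is resolved either by the Newton-polygon-refined form of Hensel, or by prescaling $x \mapsto \lambda x$ by a suitable integer $\lambda$ to move into the non-degenerate regime, and then bounding coefficient denominators inductively from the recurrence induced by $R(x) f'(x) = S(x, f(x))$.
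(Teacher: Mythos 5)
First, a point of calibration: the paper does not prove this statement. It is quoted as a classical result, attributed to Eisenstein (1852) with the first complete proof credited to Heine (1853--54), and is used as a black box throughout. So there is no in-paper argument to compare against; your proposal has to stand on its own. The route you choose --- handle almost all primes by a Hensel/implicit-function argument, then bound denominator growth at the finitely many remaining primes --- is the standard modern line of attack, and the first half is essentially sound: with $f(0)=0$, $P\in\Z[x,y]$, $P(0,0)=0$ and $D\coloneqq\partial_yP(0,0)$, the recursion coming from $[x^n]P(x,f(x))=0$ shows directly that the coefficients of $f$ lie in $\Z[1/D]$, so every prime not dividing $D$ is good (your extra conditions on the leading coefficient and on $R(0)$ are not needed for this step, and the identification of the Hensel root with $f$ via uniqueness of the implicit function is routine).

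The genuine gap is the degenerate case, and it is not a side issue: nothing in your normalization guarantees $D=\partial_yP(0,0)\neq 0$, and if $D=0$ then \emph{every} prime is bad, so the cofinite-good-primes step never gets off the ground (take $P(x,y)=y^2-x^2(1+x)$, $f=x\sqrt{1+x}$). Your proposed repair by prescaling $x\mapsto\lambda x$ cannot work: $\partial_yP(0,0)$ and $R(0)$ are evaluations at $x=0$ and are unchanged by any such substitution. The Newton-polygon/$p$-adic Weierstrass alternative can be made to work, but as written it is a name rather than an argument; likewise the inductive bound from $R(x)f'(x)=S(x,f(x))$ is delicate when $R(0)=0$, and even when $R(0)\neq 0$ the nonlinearity of $S$ in $y$ means a bound of the form $v_p(c_k)\geq -Ak-B$ does not propagate naively through products $c_{k_1}\cdots c_{k_j}$. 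The standard fix, which removes the case split entirely, is the shearing substitution: $\partial_yP(x,f(x))$ is a nonzero power series (separability), say of order $m$ at $x=0$; writing $f=f_0+f_1x+\cdots+f_Nx^N+x^Ng$ with $N>2m$ and replacing $P(x,y)$ by $x^{-e}P(x,f_0+\cdots+f_Nx^N+x^Nz)$ for the appropriate $e$ yields a polynomial $Q$ with $\partial_zQ(0,g(0))\neq 0$. A single weighted induction on the recursion $[x^n]Q(x,g(x))=0$ (using that each product $c_{k_1}\cdots c_{k_j}$ contributing to the coefficient of $x^n$ has $k_1+\cdots+k_j\leq n$ with all $k_i\geq 1$) then gives $D^{2n}\cdot[x^n]g\in\Z[1/c]$ for a fixed integer $c$, i.e.\ the required linear denominator growth simultaneously at all primes, with the bound trivial outside the divisors of $cD$. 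Until the degenerate configuration is dispatched by something of this kind, the proof is incomplete.
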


We shall call a power series $f(x)$ that fulfills Eisenstein's criterion, that is if $f(Mx) - f(0) \in \Z\ps{x}$ for some $M \in \Z \setminus \{ 0 \}$, \emph{almost integral}. 
It follows from \cref{thm:Eisenstein} that if the Gaussian hypergeometric function $F(x)$ is algebraic then for all~$n \geq 0$
\begin{equation} \label{eq:gaussian-coeffs}
\frac{(\alpha)_n (\beta)_n}{(\gamma)_n n!} \in \Z[1/M] 
\end{equation}
for some $M \in \Z \setminus \{0\}$. In other words, only finitely many distinct prime numbers appear in the denominators of (\ref{eq:gaussian-coeffs}). Assuming that $\alpha,\beta,\gamma \in \Q$ but $\alpha-\gamma,\beta-\gamma,\alpha,\beta,\gamma \not \in \Z$, Landau could show that this is precisely the case if for all $\lambda \in \Z$ coprime to the common denominator of $\alpha,\beta,\gamma$ either
\begin{equation} \label{eq:Landau-interlacing}
\dpc{\lambda \alpha} < \dpc{\lambda \gamma} < \dpc{\lambda \beta} \text{ or } \dpc{\lambda \beta} < \dpc{\lambda \gamma} < \dpc{\lambda \alpha},
\end{equation}
where $\dpc{x} \coloneqq x - \lfloor x \rfloor$ for $x \in \Q \setminus \Z$ and $\lfloor x \rfloor$ denotes the floor function. In other words, $\exp(2 \pi i \lambda \gamma )$ is always between $\exp(2 \pi i \lambda \alpha )$ and $\exp(2 \pi i \lambda \beta)$ on the unit circle. Quite surprisingly, Errera~\cite{Errera13} could show that (\ref{eq:Landau-interlacing}) is also a sufficient criterion for algebraicity since it is equivalent to Schwarz' classification. Note that the assumption on the non-integrality of the parameters is of importance. For example, the famous complete elliptic integral of the first kind $K(x)$ can be written as 
\begin{equation}
    \frac{2}{\pi} K(\sqrt{x}) = \sum_{n \geq 0} \binom{2n}{n}^2 \left( \frac{x}{16}\right)^n = \pFq{2}{1}{1/2, 1/2}{1}{x} =1 + \frac{1}{4}x + \frac{9}{64}x^2 + \frac{25}{256}x^3 +   
\ldots, \label{eq:completeK}
\end{equation}
a Gaussian hypergeometric function with $\alpha=\beta=1/2$ and $\gamma=1 \in \Z$. The series is clearly almost integral, but not algebraic (see~\cite[\S2.1.2]{BoCaRo24} and more generally \cite{Christol86}).\\

The Landau-Errera criterion was rediscovered by Katz~\cite[\S 6]{Katz72} who not only reproved (\ref{eq:Landau-interlacing}) making use of his proof of the Grothendieck $p$-curvature conjecture for Picard-Fuchs equations, but also provided a conceptual reason why being almost integral is not only necessary, but also sufficient for hypergeometric functions. 

Now let us turn the attention to hypergeometric functions in general:
\[
F(x) = \pFq{p}{q}{a_1,\ldots, a_p}{b_1,\ldots, b_q}{x} \in \Q\ps{x}.
\]
We will for now assume that $F(x)$ is well-defined and not a polynomial, i.e., $a_j, b_k \not \in -\N$ for all $1 \leq j \leq p, 1 \leq k \leq q$. Then it is not difficult to see (\cref{lem:p=q+1} below) that algebraicity of $F(x)$ implies $p = q+1$. So we will also make this assumption.

One way to deal with irrational parameters is to invoke a theorem of Galo\v{c}kin~\cite[Thm.~2]{Galockin81} where he could prove that a hypergeometric function is a G-function (which is necessary for being algebraic) if and only if all its irrational parameters can be written in pairs $(a_{j_1},b_{k_1}),\dots,(a_{j_\ell},b_{k_\ell})$ with $j_{\mu} \neq j_{\nu}$ and $k_{\mu} \neq k_{\nu}$ for $\mu \neq \nu$ such that $a_{j_\nu} - b_{k_\nu} \in \N$ for $\nu=1,\dots,\ell$.  Rivoal recently gave a short proof of Galo\v{c}kin's theorem in~\cite{Rivoal22} which relies on a famously deep theorem of André, Chudnovsky and Katz that states that the local exponents of the minimal order differential equation of a G-function are rational numbers. Without relying on all these results, we will prove in an elementary way in Corollary~\ref{cor:falgfcalg} that $F(x)$ is algebraic if and only if $F^{\cc}(x)$ is algebraic, where $F^{\cc}(x)$ is obtained from $F(x)$ by removing all the pairs $(a_j,b_k)$ with $a_j-b_k \in \N$.

Therefore from now on in this section we will assume
\begin{equation} \label{eq:hypergeom:pFqrational}
F(x) = \pFq{p}{p-1}{a_1,\ldots, a_p}{b_1,\ldots, b_{p-1}}{x} \in \Q\ps{x}, \quad a_1\dots,a_p,b_1,\dots,b_{p-1} \in \Q \! \setminus \! ( -\N ).
\end{equation}
In the amazing but not so well-known work~\cite{Christol86}, Christol precisely characterizes such hypergeometric functions that are almost integral. Following his work, we define
\[
\langle x \rangle \coloneqq \begin{cases} 1 & \text{ if } x\in \Z,\\
x - \lfloor x \rfloor & \text{ else},
\end{cases}
\]
and the following total ordering on $\R$: 
\begin{equation}
a\preceq b \longeq \dpc{a}<\dpc{b} \text{ or } (\dpc{a}=\dpc{b} \text{ and } a\geq b),
\end{equation}
in order to deal with the case $a_j - b_k \in \Z$ for some $j,k$, i.e., $\langle a_j \rangle = \langle b_k \rangle$. Note that for two distinct real numbers with the same fractional part, the smaller one in $\R$ is greater with respect to $\preceq$. Then Christol proves the following theorem \cite[Prop.~1]{Christol86}:

\begin{thm}[Christol]\label{thm:Christol}
Let $F(x)$ be as in (\ref{eq:hypergeom:pFqrational}), denote by $N$ the least common denominator of all parameters, and set $b_p=1$. Then $F(x)$ is almost integral if and only if for all $1\leq \lambda  \leq N$ with $\mathrm{gcd}(\lambda ,N)=1$ we have for all $1 \leq k \leq p$ that 
\begin{equation} \label{eq:christolgeq0}
| \{\lambda a_j \preceq \lambda b_k \colon 1 \leq j \leq p\}|-| \{\lambda b_j \preceq \lambda b_k \colon 1 \leq j \leq p\}|\geq 0. 
\end{equation}
\end{thm}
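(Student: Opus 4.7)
The plan is to characterize almost integrality by translating it into a uniform $p$-adic condition and then matching that condition to Christol's $\preceq$-interlacing criterion via Dirichlet's theorem.

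\textbf{First step: reduction to $p$-adic nonnegativity.} Write the $n$-th Taylor coefficient of $F$ as $c_n = \prod_{j=1}^p (a_j)_n / \prod_{k=1}^p (b_k)_n$, using the convention $b_p=1$ so that $(b_p)_n = n!$. By definition $F(Mx) - F(0) \in \Z\ps{x}$ for some $M \neq 0$ iff $M^n c_n \in \Z$ for every $n$. The primes dividing $N$ are finite in number, and for each of them $v_p(c_n)$ is bounded below by a linear function of $n$ (every parameter contributes at most $O(n)$ to a negative valuation), so they can be absorbed into $M$. Thus almost integrality is equivalent to requiring $v_p(c_n) \ge 0$ for every $n \ge 0$ and every prime $p$ coprime to $N$, outside a finite exceptional set.

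\textbf{Second step: explicit formula and level decomposition.} For $a\in\Q$ with denominator coprime to $p$, grouping the factors of $(a)_n$ by the exact power of $p$ dividing $a+k$, and using that $v_p(a+k)\ge i$ iff $k\equiv -a\pmod{p^i}$ in $\Z_p$, one obtains
\[
v_p((a)_n) = \sum_{i\ge 1}\Big\lfloor \tfrac{n+r_i(a,p)}{p^i}\Big\rfloor,
\]
where $r_i(a,p)\in\{0,\ldots,p^i-1\}$ is the canonical representative of $a-1$ modulo $p^i$. Combined over parameters, this gives a level-by-level formula for $v_p(c_n)$. For $n<p^i$, the $i$-th level contribution equals $\#\{j:r_i(a_j,p)\ge p^i-n\}-\#\{j:r_i(b_j,p)\ge p^i-n\}$, and as $n$ varies over $[0,p^i)$ one extracts the combinatorial condition
\[
\#\{j:r_i(a_j,p)\le t\}\le \#\{j:r_i(b_j,p)\le t\}\quad\text{for all } t\in\{0,\ldots,p^i-1\}.
\]
Choosing $t$ of the form $r_i(b_k,p)$ gives exactly one inequality for each $k$.

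\textbf{Third step: Dirichlet limit to $\dpc{\lambda\,\cdot\,}$.} Fix $\lambda\in\{1,\ldots,N\}$ with $\gcd(\lambda,N)=1$. The inverse $N^{-1}\pmod{p^i}$ is an integer $\mu_i\in\{1,\ldots,p^i\}$ satisfying $N\mu_i=1+k p^i$ for some $k\in\{0,\ldots,N-1\}$; this $k$ is determined by $p\bmod N$ and $i$, and ranges over all of $(\Z/N)^*$ as $p$ and $i$ vary. By Dirichlet's theorem one may choose infinitely many primes $p$ in a suitable arithmetic progression (with $i$ in a fixed residue class modulo the order of $p\bmod N$) forcing $k\equiv\lambda\pmod N$. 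For such $p$, a direct computation gives $r_i(a,p)/p^i\to\dpc{\lambda a}$ as $p\to\infty$ for each parameter $a$. Passing to the limit, the $r_i$-counting inequality above turns into the Christol inequality \eqref{eq:christolgeq0}. The $\preceq$ order enters precisely when $\dpc{\lambda a_j}=\dpc{\lambda b_k}$, i.e., $a_j-b_k\in\Z$: the congruence $r_i(a_j,p)-r_i(b_k,p)\equiv a_j-b_k\pmod{p^i}$ shows that the integer ordering of the residues matches, for generic large $p$, the ordering of the parameters in $\R$; combined with the complementation $\#\{r\le t\}=p-\#\{r>t\}$ used to pass from the $v_p\ge 0$ side to the Christol side, this exactly reverses the ordering on each tie class, producing the definition of $\preceq$.

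\textbf{Main obstacle.} The delicate part is the bookkeeping in the tie case and, for sufficiency, arguing that the single condition per $\lambda$ in fact controls all levels $i\ge 1$ simultaneously. One must check that as $(p,i)$ range through the Dirichlet family for a given $\lambda$, the limiting statistics $r_i(\cdot,p)/p^i$ depend only on $\dpc{\lambda\cdot}$ and the underlying real ordering of parameters within tie classes (there is no additional higher-level information not captured by $\lambda$); this is what makes the $\preceq$-criterion both necessary and sufficient. Necessity is then realized by exhibiting, when the Christol inequality fails for some $(\lambda,k)$, a concrete $n$ near $p^i-r_i(b_k,p)$ for which $v_p(c_n)<0$; sufficiency follows by running the same analysis in reverse, using that if every level-$i$ condition holds for all $p$ in the Dirichlet families, then $v_p(c_n)\ge 0$ for almost all $p$ and all $n$.
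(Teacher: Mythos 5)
The paper does not actually prove this statement: it is quoted verbatim from Christol (\cite[Prop.~1]{Christol86}) and used as a black box, so there is no internal proof to compare against. Judged on its own, your skeleton is the right one and is essentially Christol's (and, for ${}_2F_1$, Landau's): reduce almost integrality to $v_p(c_n)\ge 0$ for all $n$ and all but finitely many $p$, expand $v_p$ of the Pochhammer quotient by the Legendre-type formula $v_p((a)_n)=\sum_i\lfloor (n+r_i)/p^i\rfloor$ (your formula and the identification of $r_i$ with the residue of $a-1$ are correct), and use Dirichlet to realize every $\lambda\in(\Z/N\Z)^*$ as $\pm p^{-i}\bmod N$.

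Two places remain genuinely open, and they are exactly where a sign error would creep in. First, the level decoupling: sufficiency is fine since each level is separately nonnegative, but for necessity you must produce an $n$ at which one bad level is not cancelled by the others. This does work, but the clean reason is worth recording: for $p$ large and $n<p$, every level $i\ge 2$ contributes exactly $0$, because $r_i(a)/p^i$ is within $O(p^{-i})$ of a multiple of $1/N$ bounded away from $1$ unless $\lambda_i a\in\Z$, which (as $\gcd(\lambda_i,N)=1$) forces $a\in\Z$, hence $a\ge 1$ and $r_i(a)=a-1$ is small rather than large. Without this observation the "concrete $n$ near $p^i-r_i(b_k,p)$" step is not justified. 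Second, the tie bookkeeping: your literal claim $r_i(a,p)/p^i\to\dpc{\lambda a}$ is false under the paper's convention $\dpc{x}=1$ for $x\in\Z$ — one computes $r_i(a,p)/p^i=\dpc{(a-1)/p^i+\lambda a}$ (true fractional part), which for $\lambda a\in\Z$ and $a\ge 1$ tends to $0$, not $1$; more generally, within a tie class the residues order themselves as $r_i(a)-r_i(b)\approx a-b$. Reconciling "$a$'s must dominate when scanning in $\preceq$-order up to each $\lambda b_k$" with "$b$'s must dominate when scanning residues upward" requires tracking both the orientation reversal ($\lambda\equiv -p^{-i}$ versus $+p^{-i}$, harmless only because one quantifies over all $\lambda$) and the direction of the tie-break; your sketch asserts these cancel to give exactly $\preceq$ but does not verify it, and this is precisely the content that distinguishes Christol's criterion from a naive interlacing count.
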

Christol's criterion can be seen as a generalization of Landau's. On the unit circle it means that for each suitable $\lambda$ going counterclockwise starting after $1$, one encounters at least as many points $\exp(2\pi i \lambda a_j)$ as $\exp(2\pi i \lambda b_k)$. According to the definition of $\preceq$, in the case $\exp(2\pi i \lambda a_j) = \exp(2\pi i \lambda b_k)$ for some $j,k$, the point $\exp(2\pi i \lambda a_j)$ counts first if $ a_j > b_k$.\\


It is well known that $F(x)$ satisfies a linear homogeneous differential equation:
\begin{equation} \label{eq:hypergeomdiffeq}
x(\theta+a_1)\cdots (\theta+a_p)F(x) = \theta (\theta+b_1-1)\cdots (\theta+b_{p-1}-1)F(x) \qquad \big(\theta = x \frac{\mathrm{d}}{\mathrm{d}x} \big)
\end{equation}
which we will refer to as the \emph{hypergeometric equation}. We will prove in \cref{lem:mindifop} that in the case $a_j-b_k \not \in \N$ for all $j,k$ this equation is the least order homogeneous differential equation for $F(x)$. In this case the algebraicity of one solution of (\ref{eq:hypergeomdiffeq}) is equivalent to algebraicity of all its solutions (see, for example, \cite[Prop.~2.5]{Singer80}). If additionally to the assumptions $a_j - b_k \not \in \Z$ for all $j,k$ then a basis of solutions to the hypergeometric equation at zero is given by 
\begin{equation} \label{eq:basishypergeom}
z^{1-b_k} \pFq{p}{p-1}{1 + a_1 - b_k,\ldots, 1 + a_p - b_k}{1 + b_1 - b_k,\dots, 1 + b_{p} - b_k}{x}, \quad k=1,\dots,p,
\end{equation}
where $b_p \coloneqq 1$ and the parameter $1+b_k-b_k = 1$ at the bottom is omitted. 

Christol observed that under the above assumptions, if all these functions are almost integral, i.e., if \cref{thm:Christol} holds for all of them, then the left-hand side in (\ref{eq:christolgeq0}) attains only the values $0$ and $1$. On the unit circle this means that $\exp(2\pi i \lambda a_j)$ and $\exp(2\pi i \lambda b_k)$ actually ``interlace''. More precisely, one defines: 

\begin{defi} \label{defi:interlace}
    Given two equinumerous and disjoint multisets of real numbers $A=\{a_1,\ldots, a_p\}$ and $B=\{b_1,\ldots, b_p\}$, suppose that $\langle a_1\rangle\leq \langle a_2 \rangle \leq \cdots \leq \langle a_p \rangle $ and $\langle b_1 \rangle \leq \langle b_2 \rangle \leq\cdots \leq \langle b_p \rangle$. We say that $A$ and $B$ {\it interlace}, if $\langle a_1 \rangle < \langle b_1 \rangle < \langle a_2 \rangle < \cdots < \langle a_p \rangle < \langle b_p \rangle.$ 
\end{defi}
As before, interlacing according to \cref{defi:interlace} can be graphically interpreted by looking at the (multi)sets $\{ \exp(2\pi i a_j)\}$ and $\{\exp(2\pi i b_k)\}$ on the unit circle. 

The above considerations imply the following corollary to \cref{thm:Christol}:
\begin{cor}[Christol] \label{cor:christol:alg}
    Assume that $F(x)$ is as in (\ref{eq:hypergeom:pFqrational}) and $a_j, a_j-b_k \not \in \Z$. If $F(x)$ is algebraic then $A=\{\lambda a_1,\ldots, \lambda a_p\}$ and $B=\{\lambda b_1,\ldots, \lambda b_{p-1}, \lambda\}$ interlace for all $1\leq \lambda  \leq N$ with $\mathrm{gcd}(\lambda ,N)=1$, where $N$ is the least common denominator of the parameters.
\end{cor}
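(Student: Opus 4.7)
The plan is to reduce this corollary to \cref{thm:Christol} applied to each of the $p$ basis solutions of the hypergeometric equation at the origin. First, under the hypothesis $a_j - b_k \notin \Z$, \cref{lem:mindifop} ensures that the hypergeometric equation (\ref{eq:hypergeomdiffeq}) is the minimal-order linear differential operator annihilating $F(x)$. Therefore, by \cite[Prop.~2.5]{Singer80}, algebraicity of $F(x)$ implies algebraicity of every solution of (\ref{eq:hypergeomdiffeq}). In particular, each of the $p$ basis elements in (\ref{eq:basishypergeom}) is algebraic, and so, after stripping the algebraic prefactor $x^{1-b_k}$, each hypergeometric function $F_k(x)$ with top parameters $1 + a_j - b_k$ and bottom parameters $1 + b_j - b_k$ for $j \neq k$ is algebraic as well. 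By Eisenstein's \cref{thm:Eisenstein}, each $F_k$ is then almost integral.

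Second, I would apply \cref{thm:Christol} to each $F_k$. Since the least common denominator $N$ of the parameters is preserved under rational shifts with denominator dividing $N$, all of the $F_k$'s may be analyzed using the same $N$. For $\lambda$ coprime to $N$, the integer addend $+1$ appearing in every shifted parameter is invisible to $\preceq$ after multiplication by the integer $\lambda$, so \cref{thm:Christol} applied to $F_k$ asserts that for every $m \in \{1,\dots,p\}$ (with the convention $b_p = 1$),
\[
|\{j : \lambda a_j - \lambda b_k \preceq \lambda b_m - \lambda b_k\}| \;\geq\; |\{j : \lambda b_j - \lambda b_k \preceq \lambda b_m - \lambda b_k\}|.
\]
Geometrically, this inequality says that the oriented arc on the unit circle starting just after $\exp(2\pi i \lambda b_k)$ and ending at $\exp(2\pi i \lambda b_m)$ contains at least as many points from $A$ as from $B$.

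Third, I would deduce interlacing from this family of $p^2$ inequalities. The hypotheses $a_j \notin \Z$ and $a_j - b_k \notin \Z$, combined with $\gcd(\lambda, N) = 1$, ensure that no point of $A$ coincides with a point of $B$ on the unit circle. Letting $\exp(2\pi i \lambda b_k)$ and $\exp(2\pi i \lambda b_m)$ range over pairs of cyclically consecutive elements of $B$, the inequality above forces each such arc to contain at least one element of $A$; since $|A| = |B| = p$, each arc must then contain exactly one, which is precisely the interlacing condition of \cref{defi:interlace}.

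The main obstacle I anticipate is careful bookkeeping of the ordering $\preceq$, in particular the convention $\langle x \rangle = 1$ for $x \in \Z$ that becomes relevant when $\lambda b_k \in \Z$ (notably for the contribution $b_p = 1$). The assumptions on the parameters are precisely what force $\langle \lambda a_j \rangle \neq \langle \lambda b_k \rangle$ in every comparison between a point of $A$ and a point of $B$, so that $\preceq$ reduces to ordinary comparison of fractional parts in all the relevant inequalities.
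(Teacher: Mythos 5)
Your argument is correct and follows exactly the route the paper intends: minimality of the hypergeometric operator together with \cite[Prop.~2.5]{Singer80} transfers algebraicity of $F(x)$ to the whole basis \eqref{eq:basishypergeom}, Eisenstein's theorem (\cref{thm:Eisenstein}) makes each shifted hypergeometric function almost integral, and \cref{thm:Christol} applied to each of them yields the counting inequalities from which interlacing follows by the consecutive-arc argument. The paper only asserts this chain (``the above considerations imply\dots''), so your explicit bookkeeping of the $\preceq$-comparisons and of the cancellation of the integer shift $\lambda$ is a faithful expansion of the same proof.
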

As a consequence of Katz's proof~\cite{Katz72} of the Grothendieck $p$-curvature conjecture for Picard-Fuchs differential equations, algebraicity of a basis of solutions of the hypergeometric equation is \emph{equivalent} to all solutions lying in $x^\rho \Z[\frac{1}{N}]\ps{x}$ for some $\rho \in \Q$ and $N \in \Z$ non-zero. Therefore the condition given in \cref{cor:christol:alg} is actually also sufficient for algebraicity. This reasoning was explained for Gaussian hypergeometric functions in~\cite[\S VI]{Katz72} and for general hypergeometric functions in~\cite[Chap. 5]{Katz90}, but apparently missed by Christol. Independently, Beukers and Heckman proved the same interlacing criterion in~\cite[Thm.~4.8]{BH89} by constructing a Hermitian form invariant under the monodromy of the hypergeometric equation, and consequently proving that the monodromy group is contained in the unitary group $U_n(\C)$ if and only if the parameters interlace.

\begin{thm}[Christol, Beukers-Heckman, Katz] \label{thm:BeHe} Let $F(x)$ be a hypergeometric function as in (\ref{eq:hypergeom:pFqrational}) and assume that $a_j, a_j - b_k \not \in \Z$ for all $j,k$. Let $N$ be the least common denominator of the parameters. Then $F(x)$ is algebraic if and only if $A=\{\lambda a_1,\ldots, \lambda a_p\}$ and $B=\{\lambda b_1,\ldots, \lambda b_{p-1}, \lambda\}$ interlace for all $1\leq \lambda  \leq N$ with $\mathrm{gcd}(\lambda ,N)=1$.
\end{thm}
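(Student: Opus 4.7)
The plan is to combine Christol's Theorem~\ref{thm:Christol} (applied to every element of a basis of solutions at $0$) with Katz's resolution of the Grothendieck $p$-curvature conjecture for Picard-Fuchs operators, to obtain the two directions of the equivalence.

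For the necessary direction, I would first observe that under $a_j - b_k \not\in \Z$ the hypergeometric operator in (\ref{eq:hypergeomdiffeq}) is the least-order annihilator of $F(x)$ (Lemma~\ref{lem:mindifop}), so algebraicity of $F(x)$ propagates to every solution of (\ref{eq:hypergeomdiffeq}) by~\cite[Prop.~2.5]{Singer80}. In particular each of the $p$ functions in the basis (\ref{eq:basishypergeom}) is algebraic, hence almost integral by Eisenstein's Theorem~\ref{thm:Eisenstein}. Applying Christol's Theorem~\ref{thm:Christol} to each shifted hypergeometric function yields, for every $\lambda$ coprime to $N$ and every $k$, a non-negative integer quantity of the form (\ref{eq:christolgeq0}). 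The key combinatorial observation, following Christol, is that once all $p$ such inequalities hold simultaneously, the left-hand side of (\ref{eq:christolgeq0}) can only take the values $0$ or $1$; translated on the unit circle, this gives exactly the interlacing of $A=\{\lambda a_j\}$ and $B=\{\lambda b_j,\lambda\}$.

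For the sufficient direction, I would run this argument in reverse: assuming interlacing of $A$ and $B$ for every admissible $\lambda$, Christol's inequality (\ref{eq:christolgeq0}) holds for each of the $p$ shifted functions appearing in (\ref{eq:basishypergeom}), so by Theorem~\ref{thm:Christol} the entire basis of solutions of (\ref{eq:hypergeomdiffeq}) lies in $x^{\rho}\,\Z[1/M]\ps{x}$ for a suitable $\rho\in\Q$ and a non-zero $M\in\Z$. The hypergeometric equation is a Picard-Fuchs equation, so Katz's theorem~\cite[Ch.~5]{Katz90} upgrades this common-denominator bound on a full basis to algebraicity of the whole solution space, and hence of $F(x)$ in particular.

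The main obstacle is invoking Katz's result as a black box: the implication \emph{almost integral $\Rightarrow$ algebraic} has no elementary route for hypergeometric functions, relying either on the deep $p$-curvature argument or on the equivalent monodromy-and-Hermitian-form construction of Beukers-Heckman~\cite[Thm.~4.8]{BH89}. In the necessary direction the subtle point is rather the bookkeeping that forces the expression in (\ref{eq:christolgeq0}) to be $\{0,1\}$-valued across all $p$ cyclic shifts $b_k\mapsto b_k$; this, however, is essentially a pigeonhole argument once the parameters are read on the unit circle, the point being that summing the $p$ inequalities over $k$ telescopes to zero and so each non-negative summand must be as small as possible.
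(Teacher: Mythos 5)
Your proposal is correct and follows essentially the same route the paper takes: Theorem~\ref{thm:BeHe} is presented there as a known result, and the surrounding discussion in Section~\ref{sec:history} derives it exactly as you do --- Christol's Theorem~\ref{thm:Christol} applied to the full basis (\ref{eq:basishypergeom}) forces the quantity in (\ref{eq:christolgeq0}) to be $\{0,1\}$-valued (hence interlacing) for necessity, and Katz's $p$-curvature theorem upgrades almost-integrality of the basis to algebraicity for sufficiency, with the Beukers--Heckman monodromy construction as the acknowledged alternative to that black box.
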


\begin{ex} \label{ex:BeHe}
    In order to decide whether $f(x) = {}_{3}F_{2}([1/14,3/14,11/14],[1/7,3/7];x)$ is algebraic we set $(a_1,a_2,a_3) = (1/14,3/14,11/14)$ and $(b_1,b_2,b_3) = (1/7,3/7,1)$ and draw for each $1 \leq \lambda \leq 14$ coprime to 14, i.e., for $\lambda \in \{1,3,5,9,11,13 \}$, on the unit circle the points $\{\exp(2 \pi i \lambda a_j): j = 1,2,3\}$ in {\color{red}red} and the points $\{\exp(2 \pi i \lambda b_k): k = 1,2,3\}$ in~{\color{blue} blue}:
        \[
        \BeHe{14}{1/14}{3/14}{11/14}{1/7}{3/7}
        \]
    In each case the points interlace, therefore one concludes that $f(x)$ is an algebraic function.
\end{ex}


We note that Beukers and Heckman provided a list~\cite[\S5, Thm.~7.1, Table~8.3]{BH89}, similar to the one by Schwarz one century earlier but for ${}_{p}F_{p-1}$ with $p \geq 3$, in which they list all possible sets of parameters (under the assumptions of the theorem above), such that the criterion for algebraicity is fulfilled. \cref{ex:BeHe} above corresponds to the case No.~4 in their table. 

It is the purpose of the present work to investigate what can be said if in \cref{thm:BeHe} one drops the assumptions $a_j,b_k \in \Q$ and $a_j, a_j - b_k \not \in \Z$ for all $j,k$.

\section{The algebraicity criterion} \label{sec:result}
In this section we describe a full decision procedure which resolves the question whether a hypergeometric function is algebraic. We shall fix some notation first: Let 
\[
F(x) = \pFq{p}{q}{a_1,a_2,\ldots, a_p}{b_1,b_2,\ldots, b_{q}}{x} 
\coloneqq \sum_{n=0}^\infty \frac{(a_1)_n\cdots (a_p)_n}{(b_1)_n\cdots (b_q)_n}  \frac{x^n}{n!}
\]
be a hypergeometric function. We will use the three notions \emph{reduced}, \emph{contracted} and \emph{contraction}. It turns out that these terms are natural for 
\[
\F(x) = \bigF{c_1,c_2,\ldots, c_r}{d_1,d_2,\ldots, d_{s}}{x} \coloneqq \sum_{n=0}^\infty \frac{(c_1)_n\cdots (c_r)_n}{(d_1)_n\cdots (d_s)_n}  x^n,
\]
but look like quite strange definitions for $F(x)$. Note that via
\begin{align*}
    \bigF{c_1,c_2,\ldots, c_r}{d_1,d_2,\ldots, d_{s}}{x} & = \pFq{p}{q}{c_1,c_2,\ldots, c_r,1}{d_1,d_2,\ldots, d_{s}}{x}  \text{ and } \\
    \pFq{p}{q}{a_1,a_2,\ldots, a_p}{b_1,b_2,\ldots, b_{q}}{x} & = \bigF{a_1,a_2,\ldots, a_p}{b_1,b_2,\ldots, b_{q},1}{x}
\end{align*}
 each $F(x)$ can be rewritten as $\F(x)$ and vice versa. Therefore, every time there is a statement for $F(x)$ we implicitly mean it for the corresponding $\F(x)$ and also the other way around. For the following definitions we assume that $F(x)$ and $\F(x)$ are well-defined and not polynomials, i.e., all the parameters $a_i, b_j, c_k, d_\ell$ are not in $-\N$. Moreover, we will always assume $a_j\neq b_k$ and $c_j\neq d_k$ for all pairs~$(j,k)$.

We say that $\F(x)$ is \textit{reduced}, if $c_j - d_k\not \in \Z$ for all $1\leq j\leq r, 1\leq k\leq s$. This is exactly Christol's notion ``\textit{réduite}'', see \cite[p.12]{Christol86}. For $F(x)$ this means that $a_j - b_k\not \in \Z$ for all $1\leq j\leq p, 1\leq k\leq q$ and at most one $a_j \in \Z$ which then must be equal to 1.


Further, $\F(x)$ is said to be \textit{contracted} if $c_j-d_k\not \in \N$ for all $1\leq j\leq r, 1\leq k\leq s$. We will say that $F(x)$ is contracted if the corresponding $\F(x)$ is. Clearly, being contracted is a strictly weaker property than being reduced. 

\begin{ex} \label{ex:reduced-contracted}
Consider the hypergeometric function \( f(x) = {}_{2}F_1([1/2, 2],[3/2];x).
    \) Clearly, $f(x)$ is not reduced, since $1/2-3/2 \in \Z$ and also $2 \in \Z$. Moreover, $f(x)$ is not contracted either since $f(x) = \F([1/2, 2],[3/2,1];x)$ and $2-1 \in \N$. On the other hand, the function $g(x) = {}_{2}F_1([1/2, 1/2],[3/2];x)$ is contracted but not reduced. Finally, $h(x) = {}_{2}F_1([1/2, 1],[1/3];x)$ is neither reduced nor contracted since $h(x) = \F([1/2],[1/3];x)$.
\end{ex}

Finally, we define the \textit{contraction} $\F^{\cc}(x)$ of $\F(x)$ as the function obtained from $\F(x)$ by removing in each step one of the pairs of parameters $c_j, d_k$, where $c_j - d_k \in \N$ and $c_j - d_k$ minimal among such differences, until no such pair exists any more. Note that the contraction is independent of the order of removal of pairs of parameters with the same difference. Moreover, the contraction of $\F(x)$ is contracted by definition and the contraction of a contracted function is the function itself. As before, the contraction of a hypergeometric function $F(x)$ is defined as the contraction of the corresponding $\F(x)$.

\begin{ex}
The contraction of $f(x) = {}_{2}F_1([1/2, 2],[3/2];x)$ in \cref{ex:reduced-contracted} is $f^{\cc}(x) = \F([1/2],[3/2];x) = {}_{2}F_1([1/2, 1],[3/2];x)$ which is not reduced. Note that on the level of the $_{2}F_{1}$ functions one only sees that the parameter 2 turned into 1 in the process of contraction, even though actually the pair (2,1) was removed in $\F$ and then the artificial $1$ was added. This is the reason we defined the contraction via $\F(x)$. Now let $g(x)$ be the hypergeometric function in (\ref{eq:hyperintro}), i.e.
    \[ g(x) = \pFq{3}{2}{1/2,,\sqrt{2}+1,,-\sqrt{2}+1}{\sqrt{2},,-\sqrt{2}}{x} = \bigF{1/2,,\sqrt{2}+1,,-\sqrt{2}+1}{\sqrt{2},,-\sqrt{2},,1}{x}.
    \]
    Clearly, $g(x)$ is neither reduced, nor contracted and the contraction $g^{\cc}(x)$ is given by $\F([1/2],[1];x) = {}_{1}F_{0}([1/2],[\;];x) = (1-x)^{-1/2}$.
\end{ex}

\begin{ex}
    In the contraction steps the assumption on minimality between the differences $a_j-b_k$ is important as the following example shows:
    \begin{equation} \label{eq:excontr}
        \bigF{\frac{1}{3}, \frac{1}{2}, 2, 4}{\frac{3}{2}, 3, 1, 1}{x}^{\cc} = \bigF{\frac{1}{3}, \frac{1}{2}}{\frac{3}{2},1}{x}.
    \end{equation}
    If we first removed the pair $(a_4,b_3) = (4,1)$, we would obtain a different contracted function. Note that the function in (\ref{eq:excontr}) is contracted but not reduced.
\end{ex}

\begin{defi} \label{defi:ICfunc}
    Let $\F(x) = \F([c_1,\dots,c_r],[d_1,\dots,d_s];x)$ be a hypergeometric function with rational parameter sets $C = \{c_1,\dots,c_r\}$ and $D = \{d_1,\dots,d_s\}$ and let $N$ be the least common denominator of all the parameters. We say that $\F(x)$ satisfies the \emph{interlacing criterion (IC)} if $r=s$ and for all $1 \leq \lambda \leq N$ with $\gcd(\lambda,N)=1$ the multisets $\lambda C$ and $\lambda D$ interlace (according to \cref{defi:interlace}). 
\end{defi}

We mention that \cref{thm:BeHe} precisely says that if a hypergeometric function $F(x)={}_pF_{p-1}([a_1,\ldots, a_p],[b_1,\ldots, b_{p-1}];x)$ fulfills the assumptions of that theorem, that is, $a_j, a_j - b_k \in \Q \setminus \Z$ for all $j,k$, then $F(x)$ is algebraic if and only if it satisfies the interlacing criterion~IC.\footnote{Note that transcendental but almost integral hypergeometric functions still exist (see (\ref{eq:completeK})) -- those must, however, have at least one integral parameter~\cite[\S VI]{Christol86}.} If, however, $a_j \in \Z$ or $ a_j - b_k \in \Z$ for some $j,k$ (i.e., if $F(x)$ is not reduced), then the interlacing criterion automatically cannot be fulfilled, because Definition~\ref{defi:interlace} of interlacing has strict inequalities.  Also remark that the interlacing criterion can, by definition, only hold if $p=q+1$.

We now formulate our criterion of algebraicity for hypergeometric functions without any restriction on the parameter set. It reduces the question to the case in which \cref{thm:BeHe} is applicable, i.e., reduced hypergeometric functions with rational parameters.
\begin{thm}\label{thm:tree}
    A non-polynomial hypergeometric function $F(x) \in \Q\ps{x}$ is algebraic over $\Q(x)$ if and only if its contraction $F^\cc(x)$ has rational parameters and satisfies the interlacing criterion (IC).
\end{thm}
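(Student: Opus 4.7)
The plan is to prove both directions by reducing to \cref{thm:BeHe}, with the contraction operation bridging the general case to the reduced rational setting where BeHe applies. The first common step is to invoke \cref{cor:falgfcalg}---proved elsewhere in the paper by an elementary argument---so that $F$ may be replaced by $F^\cc$ throughout, and we work with a hypergeometric function that is contracted by definition.

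For the $(\Leftarrow)$ direction I assume $F^\cc$ has rational parameters and satisfies IC. The strict inequalities in the definition of interlacing (\cref{defi:interlace}) immediately rule out $\langle c_j \rangle = \langle d_k \rangle$ for any top-bottom pair in the $\F$-form of $F^\cc$, so $c_j - d_k \notin \Z$; moreover the only possible integer parameter (which must sit at the maximum position in the sorted $d$-list) is the artificial $d_s=1$ introduced when converting from $F$- to $\F$-form. Together these observations show that $F^\cc$ satisfies the reducedness hypotheses of \cref{thm:BeHe}, and the IC condition is exactly the BeHe interlacing criterion, so that theorem yields algebraicity of $F^\cc$, whence $F$ is algebraic by \cref{cor:falgfcalg}.

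For the $(\Rightarrow)$ direction I assume $F$, hence $F^\cc$, is algebraic. Algebraic power series are G-functions, so Galo\v{c}kin's theorem~\cite[Thm.~2]{Galockin81} (see the short modern proof in~\cite{Rivoal22}) forces every irrational parameter of $F^\cc$ to pair with another differing by a natural number; since $F^\cc$ is contracted no such pairs survive, giving rational parameters. For IC I invoke \cref{lem:mindifop}: for contracted $F^\cc$ the hypergeometric equation is the minimal-order annihilator, so algebraicity of $F^\cc$ propagates to algebraicity of all its solutions. If $F^\cc$ were contracted but not reduced (some $c_j - d_k \in \Z_{<0}$), a careful Frobenius analysis at $x=0$ combined with Christol's almost-integrality criterion (\cref{thm:Christol}) applied to the basis candidates~\eqref{eq:basishypergeom} would exhibit a transcendental solution of the hypergeometric equation, contradicting the algebraicity just established; hence $F^\cc$ must be reduced, and \cref{thm:BeHe} then produces IC.

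The delicate step---and the main obstacle---is this last transcendence argument: translating the integer-difference obstruction $c_j - d_k \in \Z_{<0}$ into an explicit transcendental solution of the hypergeometric equation. It requires tracking which elements of the formal basis~\eqref{eq:basishypergeom} degenerate to polynomials when top parameters become non-positive integers, which elements carry logarithmic-type transcendence from coincidences of Frobenius exponents, and how the resulting interplay interacts with the almost-integrality forced by Eisenstein's theorem.
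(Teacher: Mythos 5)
Your overall architecture (contract via \cref{cor:falgfcalg}, then decide everything on the contracted function) matches the paper, and your use of Galo\v{c}kin's theorem to force rational parameters is a legitimate alternative to the paper's elementary route via \cref{lem:mindifop} and the local exponents at $0$ and $\infty$ (the paper itself notes this alternative in a remark). However, there are two genuine gaps. First, in the $(\Leftarrow)$ direction your claim that IC forces the only integer parameter to be the artificial $d_s=1$ is false: a reduced $\F$ satisfying IC may carry an integer bottom parameter $d_k\in\N$ with $d_k>1$, since interlacing only sees $\langle d_k\rangle=1$; the paper's own examples $\F([5/6,1/2],[5/3,2];x)$ (Gessel excursions) and $\F([1/4,1/2,3/4],[1/3,2/3,4];\cdot)$ are exactly of this type. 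In that case \cref{thm:BeHe} does \emph{not} apply directly, because the ${}_pF_{p-1}$ form has top parameter $1$ at integral distance from a bottom parameter; one needs the shift identity \eqref{eq:trivid} (that is, \cref{lem:BetterHe}) to bring that $d_k$ down to $1$ before invoking Beukers--Heckman. A smaller omission in the same spirit: you never argue that algebraicity forces $p=q+1$ (equivalently $r=s$), which IC presupposes; this requires the radius-of-convergence estimate of \cref{lem:p=q+1}.

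Second, and more seriously, the step you yourself flag as the main obstacle --- that an algebraic, contracted, rational-parameter function must be reduced --- is precisely the hard part of the theorem (\cref{prop:notreduced} in the paper), and your sketch does not establish it. The paper's argument does not exhibit a transcendental solution of the hypergeometric ODE at all: it passes to the \emph{reduction} $F^{\rr}$ (removing pairs with $d_k-c_j\in\N$), shows $F^{\rr}$ is algebraic and hence interlacing by \cref{lem:falgfralg} and \cref{lem:BetterHe}, and then derives a purely arithmetic contradiction by applying Christol's criterion (\cref{thm:Christol}) to the globally bounded intermediate function obtained by reinserting the last removed pair $(c,d)$, via a case analysis of where $\langle c\rangle=\langle d\rangle$ sits inside the interlacing pattern. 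Your proposed Frobenius/logarithm route is, moreover, pointed in a doubtful direction: the basis \eqref{eq:basishypergeom} is only valid when the bottom parameters have pairwise non-integral differences, and a contracted non-reduced equation need not have any coincidence of Frobenius exponents at the origin --- for instance ${}_2F_1([1/2,1/2],[3/2];x)=\arcsin(\sqrt{x})/\sqrt{x}$ is contracted, not reduced, has log-free exponents $0$ and $-1/2$ at $x=0$, and is nevertheless transcendental. So the mechanism you describe cannot detect all the non-reduced transcendental cases, and the decisive implication of the theorem remains unproved in your proposal.
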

An equivalent version of this theorem is presented in Figure~\ref{fig:tree} which can be used to decide conveniently whether any given hypergeometric function is algebraic. 




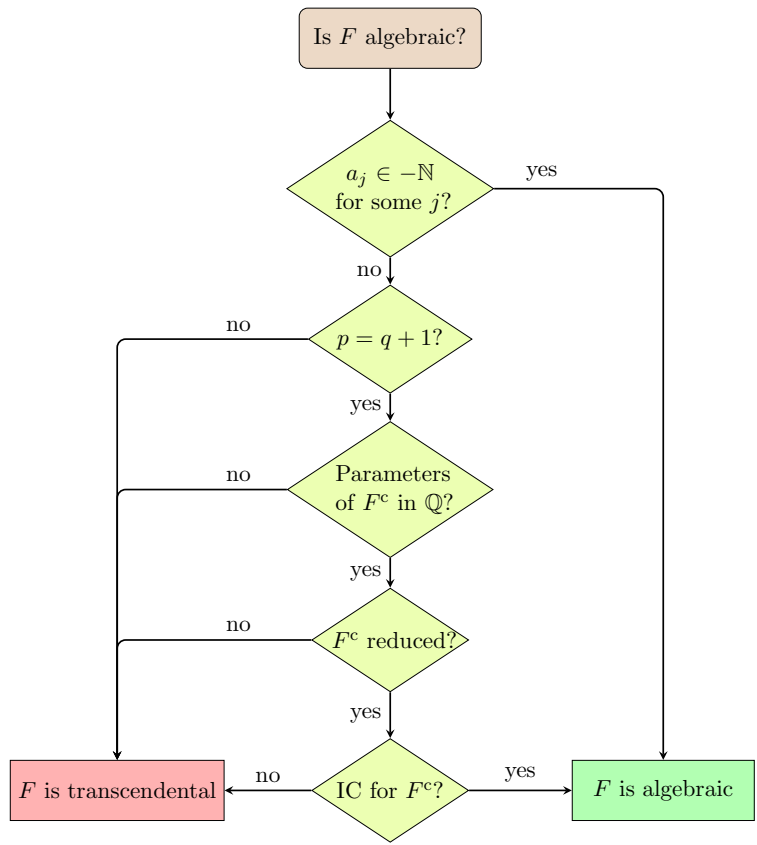
\begin{figure}[h]
    \centering
        \resizebox{10cm}{!}{
        \begin{tikzpicture}[node distance=2.5cm]
        \node (start) [startstop] {Is $F$ algebraic?};
        \node (dec1) [decision, below of=start] {$a_j \in -\N$ \mbox{for some $j$?}};
        \node (dec2) [decision, below of=dec1] {$p=q+1$?};
        \node (dec3) [decision, below of=dec2] {Parameters \mbox{of $F^{\cc}$ in $\Q$?}};
        \node (dec4) [decision, below of=dec3] {\mbox{\!$F^{\cc}$ reduced? }};
        \node (dec5) [decision, below of=dec4] {IC for $F^{\cc}$?};
        \node (no) [processno, left of=dec5, xshift=-2cm] {$F$ is transcendental};
        \node (yes) [processyes, right of=dec5, xshift=2cm] {$F$ is algebraic};
        \draw [arrow] (start) -- (dec1);
        \draw [arrow] (dec1) -- node[anchor=east] {no} (dec2);
        \draw [arrow] (dec2) -- node[anchor=east] {yes} (dec3);
        \draw [arrow] (dec3) -- node[anchor=east] {yes} (dec4);
        \draw [arrow] (dec4) -- node[anchor=east] {yes} (dec5);
        \draw [arrow] (dec5) -- node[anchor=south] {no} (no);
        \draw [arrow] (dec5) -- node[anchor=south] {yes} (yes);
        \draw [arrow] (dec1) [rounded corners] -|node[xshift=-2cm, anchor = south]  {yes} (yes);
        \draw [arrow] (dec2) [rounded corners] -|node[xshift=2cm, anchor = south] {no} (no);
        \draw [arrow] (dec3) [rounded corners] -|node[xshift=2cm, anchor = south] {no} (no);
        \draw [arrow] (dec4) [rounded corners] -|node[xshift=2cm, anchor = south] {no} (no);
        \end{tikzpicture}
        }
    \caption{Deciding whether $F(x)={}_pF_q([a_1,\ldots a_p],[b_1,\ldots, b_q];x)$ is algebraic: If $a_j\in -\N$ for some $j$, then $F(x)$ is a polynomial and thus algebraic. Otherwise only those functions $F(x)$ with $p=q+1$, where the contraction $F^{\cc}(x)$ is reduced and has rational parameters may be algebraic. It remains to check whether $F^{\cc}(x)$ satisfies the interlacing criterion~(IC) (\cref{defi:ICfunc}).}
    \label{fig:tree}
\end{figure}

\begin{rem}
    Theorem~\ref{thm:tree} is stated for hypergeometric functions $F(x)\in \Q\ps{x}$ with coefficients in $\Q$. Actually, it is possible to relax this requirement and allow $F(x)\in \overline{\Q}\ps{x}$ with coefficients being algebraic numbers, and the same statement holds true. The proofs of Lemma~\ref{lem:poly} through Proposition~\ref{prop:notrational} work analogously in this case. However, if one allows some of the coefficients (or, equivalently, the parameters) of $F(x)$ to be in $\C \setminus \overline{\Q}$, algebraicity over $\Q(x)$ can no longer be achieved. Still, one obtains the following assertion: Assume that the coefficients of $F(x)$, or, equivalently by Lemma~\ref{lem:algnumber}, the parameters of $F(x)$, lie in the algebraically closed transcendental field extension $K\supseteq \Q$. Then $F(x)\in K\ps{x}$ is algebraic over the field $K(x)$ if and only if its contraction $F^\cc(x)$ has rational parameters and satisfies the interlacing criterion (IC). For example, the hypergeometric function \[F(x)=\pFq{2}{1}{\pi+1, 1}{\pi}{x}=\sum_{n=0}^\infty \left(1+\frac{n}{\pi}\right)x^n = \frac{\pi + x - \pi x}{\pi (1 - x)^2}\] is a rational function in $\Q(\pi)(x)$. 
\end{rem}

For instance, \cref{thm:tree} allows us to classify all algebraic Gaussian hypergeometric functions \( {}_{2}F_{1}([\alpha, \beta],[\gamma];x) \).
We recall that this was already done by Schwarz~\cite{Schwarz73} (Artikel I for the ``reducible'' case and Artikel VI for the ``irreducible'' one). The latter case, that is, if $\alpha, \beta, \gamma-\alpha, \gamma-\beta \not \in \Z$ was additionally characterized by Landau in~\cite{Landau04} whose criterion is exactly the statement of \cref{thm:BeHe} for $p=2$ and $q=1$. Without loss of generality, two things can happen in the reducible case: Either there exists $k \in \Z$ such that $\gamma=\alpha+k$ or $\alpha = k \in \Z$.  The following exhaustive list characterizes algebraicity in these cases. We note that this criterion can also be derived from Vidūnas' work on degenerate Gaussian hypergeometric functions~\cite{Vidunas07}.  

\begin{cor}\label{cor:2F1list}
    The Gaussian hypergeometric function ${}_2F_1([\alpha,\beta],[\alpha+k];x)$ for $k \in \Z$ is algebraic if and only if either $k\leq0$ or: 
\begin{multicols}{2}
    \begin{enumerate}[a)]
        \item $\alpha\in \Z, \beta\not \in \Z$, or
        \item $\alpha\not \in \Z, \beta\in -\N$, or
        \item $\alpha,\beta\not \in \Z, \beta-\alpha-k\in \N$, or
        \item $\alpha, \beta\in \Z, \alpha<\beta\leq 0, \beta-\alpha\geq k$, or 
        \item $\alpha, \beta\in \Z, 0<\alpha<\beta\leq 0, \beta-\alpha\geq k$, or 
        \item $\alpha, \beta\in \Z, \beta\leq 0<\alpha$.
    \end{enumerate}
\end{multicols}
The function ${}_2F_1([k,\beta],[\gamma];x)$ for $k \in \Z$ is algebraic if and only if either $k \leq 0$ or: 
    \begin{multicols}{2}
    \begin{enumerate}[a)]
        \item $\beta \not \in -\N$, $\beta - \gamma \in \N$, or
        \item $\beta \in -\N$, or
        \item $\beta, \gamma \in \Z$, $0 < \beta < \gamma \leq k$, or
        \item $\beta \not \in \Z$, $k - \gamma \in \N$.
    \end{enumerate}
\end{multicols}
\end{cor}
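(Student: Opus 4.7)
The plan is to deduce the corollary as a direct application of the main classification \cref{thm:tree} to the two specific shapes of parameter vectors. Writing $F={}_2F_1([\alpha,\beta],[\gamma];x)$ in $\F$-form as $\F([\alpha,\beta],[\gamma,1];x)$, the four candidate contraction pairs are $(\alpha,\gamma),(\alpha,1),(\beta,\gamma),(\beta,1)$ with differences $\alpha-\gamma,\alpha-1,\beta-\gamma,\beta-1$. Specializing $\gamma=\alpha+k$, respectively $\alpha=k$, turns these into concrete integrality conditions, and the disjunctions a)--f) in the two lists are designed precisely to enumerate the patterns that place one or two of these differences in $\N$ while leaving the residual parameters of $F^\cc$ compatible with rationality and with the interlacing criterion (IC). The task thus reduces to a finite case analysis.

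I would first dispose of the polynomial branches: any ${}_2F_1$ with a numerator parameter in $-\N$ is trivially algebraic, which handles $k\le 0$ in the second list, case b) of both lists, and cases d) and f) of the first list (where the extra inequality $\beta-\alpha\ge k$ in d) ensures that the denominator parameter $\alpha+k$ does not vanish within the polynomial range, so that the truncated series is well-defined). For $k\le 0$ in the first list the pair $(\alpha,\alpha+k)$ has difference $-k\in\N$, and contracting it drives $F^\cc$ to $\F([\beta],[1];x)=(1-x)^{-\beta}$ (or, after further contraction, to $1/(1-x)$); the rationality of $\beta$ is forced by $F\in\Q\ps{x}$ via \cref{lem:algnumber}, and IC collapses to the trivial inequality $\langle\lambda\beta\rangle<1=\langle\lambda\rangle$ for $\lambda$ coprime to the denominator of $\beta$. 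Every remaining listed sub-case is treated analogously: applying the minimal-difference contraction rule leads $F^\cc$ to one of $1/(1-x),\,(1-x)^{-\alpha},\,(1-x)^{-\beta}$, or to a reduced $\F([\zeta],[\eta];x)$ with $\zeta\in\Q\setminus\Z$ and $\eta\in\Z$, for which IC again amounts to $\langle\lambda\zeta\rangle<1=\langle\lambda\eta\rangle$ and is automatic.

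For the converse direction I argue by exhaustion: if the parameters of $F$ lie outside every listed scenario, then after contraction $F^\cc$ either acquires a non-rational parameter (ruling out algebraicity by \cref{thm:tree}), or still contains a pair of parameters whose difference is a non-positive integer, so $F^\cc$ fails to be reduced and IC fails by \cref{defi:interlace}. The only genuine obstacle is the bookkeeping when several candidate contraction pairs tie for the minimal nonnegative difference; here I invoke the order-independence observation in \cref{sec:result} to confirm that $F^\cc$ is well-defined and coincides with the claimed shape. A secondary subtlety is the consistent treatment of parameters in $-\N$, ensuring that the polynomial reading of the series is valid exactly in the regime carved out by the extra inequalities in cases d) of the first list and c) of the second. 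Once these points are secured, \cref{thm:tree} together with the identifications $\F([],[];x)=1/(1-x)$ and $\F([\zeta],[1];x)=(1-x)^{-\zeta}$ delivers the corollary.
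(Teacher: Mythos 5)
Your overall strategy---push the two parameter shapes through \cref{thm:tree} (equivalently Figure~\ref{fig:tree}), track which of the pairs $(\alpha,\gamma),(\alpha,1),(\beta,\gamma),(\beta,1)$ get contracted, and test IC on what survives---is exactly the route the paper intends; the corollary is stated there as a direct consequence of the main theorem with no further argument given. The approach is therefore not in question. The gap is that you never actually execute the case analysis, and for this particular statement the case analysis \emph{is} the entire content. Both directions are deferred: ``every remaining listed sub-case is treated analogously'' and ``I argue by exhaustion: if the parameters lie outside every listed scenario, then\dots'' announce the work rather than do it, and the authors themselves warn that precisely this bookkeeping is cumbersome.

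That this is not harmless is visible in the list itself. Sub-case e) of the first family reads $0<\alpha<\beta\le 0$ and is unsatisfiable as printed, so a verification that ``confirms'' every listed case without flagging this has not really been run. More seriously, your disposal of the integer branch via ``numerator parameter in $-\N$, hence polynomial'' only handles one direction: you must also show that integer-parameter instances \emph{outside} d)--f) fail to be algebraic, and there the inequality $\beta-\alpha\ge k$ cannot merely be a well-definedness proviso. Indeed ${}_2F_1([-2,-1],[3];x)=1+\tfrac{2}{3}x$ is a well-defined polynomial (the bottom parameter $3$ is positive, so no denominator vanishes) with $\alpha<\beta\le 0$ and $\beta-\alpha<k$; it is algebraic yet matches none of a)--f) of the first list, while the second list (read with $k=-1$) correctly declares it algebraic. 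So the two ``if and only if'' lists disagree on their overlap, and a genuine proof must sort out exactly which integer configurations give well-defined polynomials, which give logarithmic (transcendental) contractions such as ${}_2F_1([1,1],[2];x)$, and which are excluded by the standing hypothesis $\gamma\notin-\N$. None of this delicate enumeration---the only nontrivial part of the corollary---is present in your proposal.
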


We see that already in the case of non-reduced Gaussian hypergeometric functions the task of classifying algebraicity is cumbersome if it is done by case distinction.\\

As another corollary we also easily obtain the following interesting observation. 
\begin{cor} \label{cor:faf'a}
    Let $F(x)$ be a hypergeometric function. Then $F(x)$ is algebraic if and only if $F'(x)$ is algebraic.
\end{cor}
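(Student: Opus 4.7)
The direction ``$F$ algebraic $\Rightarrow$ $F'$ algebraic'' is straightforward: differentiating any polynomial relation $P(x, F(x)) = 0$ yields an algebraic expression for $F'$. So the substantive content is the converse. If some $a_j \in -\N$ then $F$ is a polynomial and the statement is trivial, so I will assume $F$ is non-polynomial. A direct term-by-term computation shows $F'(x) = c \cdot G(x)$ where $c = (\prod_i a_i)/(\prod_j b_j) \neq 0$ and $G(x) = {}_pF_q([a_1+1,\ldots,a_p+1],[b_1+1,\ldots,b_q+1];x)$ is itself a non-polynomial hypergeometric function. Hence it suffices to show that $F$ is algebraic if and only if $G$ is algebraic.

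The plan is to run both $F$ and $G$ through the decision tree of Theorem~\ref{thm:tree}. Both share the same $p$ and $q$; parameters are rational in $F$ iff in $G$ (shift by $1$ preserves $\Q$). So both questions reduce to the interlacing criterion (IC) applied to the respective contractions $F^{\cc}$ and $G^{\cc}$, and I must show that IC holds for $F^{\cc}$ precisely when it holds for $G^{\cc}$. In $\F$-form, $F$ has parameter multisets $C_F = \{a_i\}$, $D_F = \{b_j,1\}$, while $G$ has $C_G = C_F + 1$ and $D_G = ((D_F \setminus \{1\}) + 1) \cup \{1\}$. Because IC is invoked only for integer $\lambda$, and integer shifts preserve $\langle \cdot \rangle$, the multisets of fractional parts $\langle \lambda C_F \rangle$ and $\langle \lambda C_G \rangle$ coincide, and likewise for $D_F$ versus $D_G$.

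Any contraction pair $(c,d)$ satisfies $c - d \in \N$, hence $c$ and $d$ have the same fractional part for every integer $\lambda$, so each removal decrements exactly the same fractional-part class on the top and the bottom. Splitting by fractional-part class, the remaining task is to show that the contractions of $F$ and $G$ remove the same multiplicity of pairs in every class. For every non-integer class the sub-multisets in $F$ and $G$ are literally a uniform shift of each other by $1$, so the greedy minimum-difference matching removes the same collection of pairs (shifted). For the integer class, the implicit ``$1$'' in $D$ stays fixed while everything else shifts by $1$; a Hall-type deficit count on the ``$c > d$'' max matching (expressing the number of removable pairs as $\min(|C^{1}|,|D^{1}|)$ minus $\max_t (\#\{d \geq t\} - \#\{c > t\})$) shows the count is invariant under the passage $F \rightsquigarrow G$. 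Granting this, $F^{\cc}$ and $G^{\cc}$ carry identical multisets of fractional parts modulo $1$ for every admissible $\lambda$, so IC for $F^{\cc}$ is equivalent to IC for $G^{\cc}$, completing the proof.

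The main obstacle is precisely the integer-class comparison: the asymmetric role of the unshifted ``$1$'' forces the greedy minimum-difference contraction to behave differently pair-by-pair in $F$ and in $G$, and one must verify by a careful combinatorial / Hall-type computation that the total number (and thus remaining multiplicities) of integer pairs removed is nevertheless the same. I also need a minor case analysis to accommodate the possibility that $a_i = 1$ for some $i$, where the valid $\F$-form of $F$ drops the implicit ``$1$'' while the $\F$-form of $G$ begins its contraction by the pair $(a_i+1,1) = (2,1)$; the end result is that $G^{\cc}$ equals $F^{\cc}$ shifted by $1$, again preserving IC.
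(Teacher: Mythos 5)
Your route is viable and is, in outline, the second argument the paper alludes to (``an immediate consequence of Figure~\ref{fig:tree} in concert with the identity \eqref{eq:fdiff}''), but it is not the paper's actual proof, and as written it stops short at exactly the step you yourself identify as the crux. The paper proves \cref{cor:faf'a} in one line from \cref{lem:span} with $\alpha=0$: since $(\theta+0)F(x)=xF'(x)$, and $xF'$ is algebraic iff $F'$ is, the equivalence is immediate. The hypothesis of that lemma ($0\notin\{a_1,\dots,a_p\}\cap\{b_1-1,\dots,b_{p-1}-1,0\}$) fails only when some $a_j=0$, in which case $F$ is constant and the claim is trivial. The lemma itself rests on Euclidean division of the hypergeometric operator by $\theta+\alpha$, which writes $H(-\alpha)F=-Q(\theta)(\theta+\alpha)F$ and hence recovers $F$ from $(\theta+\alpha)F$ whenever the scalar $H(-\alpha)$ is nonzero. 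No contraction, rationality, or interlacing analysis is needed, and in particular the argument is uniform in the parameters.

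Concerning your own argument: the skeleton is correct, and your diagnosis that the only genuine difficulty sits in the integer congruence class (because the implicit bottom parameter $1$ does not shift under $a\mapsto a+1$) is exactly right --- but that is precisely the claim you ``grant'' rather than prove. To close it you need (i) the fact that the greedy minimal-difference contraction removes, within each congruence class, a \emph{maximum} matching of the bipartite graph with edges $c\to d$ iff $c-d\in\N$ (this holds: if a minimal-difference pair $(c^*,d^*)$ is not in a maximum matching $M$, and $c^*$ is matched to $d'$, $d^*$ to $c'$, then minimality forces $d'\le d^*$ and $c'\ge c^*$, so $(c',d')$ is also an edge and one can swap); and (ii) the actual deficiency computation comparing $\delta_F(t)=\#\{c\in T:c\le t\}-\#\{d\in B\cup\{1\}:d\le t-1\}$ with $\delta_G(t)$, which gives $\delta_G(t)=\delta_F(t-1)$ away from boundary values of $t$ that never attain the maximum, plus the separate bookkeeping in the case $1\in T$ where a cancellation occurs in $F$ but not in $G$ (there both $|T|$ and the matching number increase by one, so the residual multiplicities are unchanged). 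These verifications do go through, so your proof can be completed; also, your closed formula for the number of removable pairs is only correct when the bottom class is no larger than the top one (or if $t=\pm\infty$ is admitted in the maximum). But in its present form the central combinatorial step is asserted, not established, and the entire edifice is unnecessary given \cref{lem:span}.
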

We will see that it is a special case of Lemma~\ref{lem:span} taking $\alpha=0$, but it can also be seen as an immediate consequence of Figure~\ref{fig:tree} in concert with the well-known identity 
\begin{equation} \label{eq:fdiff}
    F'(x)= \frac{a_1 \cdots a_p}{b_1 \cdots b_q } \pFq{p}{q}{a_1+1,\ldots, a_p+1}{b_1+1,\ldots, b_q+1}{x}.
\end{equation}


We remark that under the conditions of \cref{thm:BeHe}, \cref{cor:faf'a} is a trivial consequence of that theorem. If the assumption $a_j - b_k \not \in \Z$ is dropped but $a_j,b_k$ are still assumed to be rational, then \cref{cor:faf'a} can be concluded from \cref{thm:Christol} in combination with a theorem by André \cite[p.149]{Andre89} which states that if the primitive of an algebraic power series is almost integral (in the sense of \cref{thm:Eisenstein}), then it is algebraic (see also \cref{rem:andre}). Finally, if the assumption $a_j,b_k \in \Q$ is dropped as well (like in the general statement of \cref{cor:faf'a}), then one can additionally employ Galo\v{c}kin's characterization of hypergeometric G-functions ~\cite[Thm.~2]{Galockin81} (see also \cref{sec:history}) and conclude the same statement.

We also remark that the function $f(x) = -\log(1-x)$ may seem like a natural counter-example to \cref{cor:faf'a} since $f(x) = x \cdot {}_{2}F_{1}([1,1],[2];x)$, however $f(x)$ is not a quite a hypergeometric function (because of the multiplication by $x$) and $f(x)/x$ does not have the property that its derivative is algebraic.





\section{Proof of \cref{thm:tree}} \label{sec:proof}
This section is devoted to the proof of Theorem~\ref{thm:tree}. We first explain the strategy of the proof, afterwards we will state and prove all the auxiliary results used.

\begin{proof}[Proof of Theorem~\ref{thm:tree}.]
    We will check that the decision tree depicted in Figure~\ref{fig:tree} is correct. To see that it is equivalent to \cref{thm:tree} note that IC can only be fulfilled for a hypergeometric function with rational parameters if $p = q+1$ and if it is reduced. 
            
    The hypergeometric function 
    \[
        F(x)=\pFq{p}{q}{a_1,\ldots, a_p}{b_1,\ldots, b_q}{x}\in \Q\ps{x}
    \]
    is a polynomial if and only if $a_j\in -\N$ for some $j$ (Lemma \ref{lem:poly}), in this case $F(x)$ is clearly algebraic. Otherwise, $F(x)$ has to have algebraic parameters, due to Lemma~\ref{lem:algnumber}. By Lemma~\ref{lem:p=q+1} we must have $p=q+1$, or $F(x)$ is not algebraic. Now Corollary~\ref{cor:falgfcalg} shows that $F(x)$ is algebraic if and only if $F^{\cc}(x)$ is algebraic, thus we may restrict our study to the contraction of $F(x)$. Proposition~\ref{prop:notrational} shows that all the parameters of $F^{\cc}(x)$ need to be rational in order for $F(x)$ to be algebraic and Proposition~\ref{prop:notreduced} shows that $F^{\cc}(x)$ needs to be reduced in this case. Therefore, the question is simplified to deciding, whether a reduced hypergeometric function with rational parameters is algebraic. By \cref{lem:BetterHe} this is precisely the case if the interlacing criterion holds.
\end{proof}

In this section $F(x)$ always denotes the hypergeometric function
\[
F(x)=\pFq{p}{q}{a_1,\ldots, a_{p}}{b_1,\ldots, b_{q}}{x} = \sum_{n=0}^\infty \frac{(a_1)_n\cdots (a_p)_n}{(b_1)_n\cdots (b_q)_n}  \frac{x^n}{n!},
\]
where $a_1,\dots,a_p,b_1,\dots b_q \in \C$ are some parameters for which we assume that $b_k \not \in -\N$ for all $k = 1,\dots,q$. We will gradually impose more restrictions on the parameters. Moreover, 
\[
\F(x) = \bigF{c_1,c_2,\ldots, c_r}{d_1,d_2,\ldots, d_{s}}{x} \coloneqq \sum_{n=0}^\infty \frac{(c_1)_n\cdots (c_r)_n}{(d_1)_n\cdots (d_s)_n}  x^n,
\]
denotes the hypergeometric function, where we forego dividing the coefficients by $n!$, as introduced in the previous chapter. Here $c_1,\dots,c_r,d_1,\dots d_s \in \C$ are some parameters for which we assume that $d_k \not \in -\N$ for all $k = 1,\dots,s$. As mentioned before, we will naturally always assume that $a_j \neq b_k$ and $c_j \neq d_k$ for all pairs $(j,k)$.\\

The following statement is obvious but justifies the first step in Figure~\ref{fig:tree}: 
\begin{lem} \label{lem:poly}
    $F(x)$ is a polynomial if and only if $a_j \in -\N$ for some $j$. 
\end{lem}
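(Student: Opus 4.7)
The plan is to prove both directions directly from the formula for the coefficients of $F(x)$. Write the coefficient of $x^n$ as
\[
u_n \;=\; \frac{(a_1)_n\cdots(a_p)_n}{(b_1)_n\cdots(b_q)_n\,n!}.
\]
The standing assumption $b_k \notin -\N$ ensures $(b_k)_n \neq 0$ for every $n \geq 0$, and $n! \neq 0$ as well, so the denominator never vanishes. Hence $u_n = 0$ if and only if the numerator vanishes, i.e.\ if and only if $(a_j)_n = 0$ for at least one $j$.

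For the backward direction ($\Leftarrow$), if $a_j = -m$ with $m \in \N$, then $a_j + m = 0$ appears as a factor in $(a_j)_n = a_j(a_j+1)\cdots(a_j+n-1)$ as soon as $n \geq m+1$, so $u_n = 0$ for all $n > m$ and $F(x)$ is a polynomial of degree at most $m$. For the forward direction ($\Rightarrow$), if $F(x)$ is a polynomial of degree $d$, then $u_{d+1} = 0$, hence by the observation above some $(a_j)_{d+1}$ must vanish. But $(a_j)_{d+1} = 0$ forces $a_j \in \{0, -1, \ldots, -d\} \subseteq -\N$, giving the desired conclusion.

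There is no real obstacle: the argument is a one-line manipulation of the Pochhammer symbol once one records that the denominator is nonzero under the running hypothesis on the $b_k$. The lemma is included mainly to justify the very first branching in Figure~\ref{fig:tree}.
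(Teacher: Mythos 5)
Your proof is correct and is exactly the routine verification the paper has in mind: the paper states this lemma without proof, calling it obvious, and your Pochhammer-symbol computation (denominator never vanishes since $b_k\notin-\N$, numerator vanishes from some index on iff some $a_j\in-\N$) is the standard argument that fills it in. No issues.
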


Polynomials are trivially algebraic, henceforth we will also assume that $a_j,c_j \not \in -\N$. Now we show that the assumption that $F(x) \in \Q\ps{x}$ implies that all the parameters of the hypergeometric function are actually algebraic numbers.

\begin{lem} \label{lem:algnumber}
    If $F(x)\in\Q\ps{x}$ then $a_1,\dots,a_p,b_1,\dots b_q \in \overline{\Q}$ are algebraic numbers.
\end{lem}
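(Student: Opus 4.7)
The strategy is to study the ratio $R(n) := u_{n+1}/u_n$ of consecutive coefficients of $F(x)$. Since every $a_j \notin -\N$ (standing assumption after \cref{lem:poly}) and $b_k \notin -\N$, each coefficient $u_n := [x^n]F(x)$ is nonzero; the hypothesis $F(x) \in \Q\ps{x}$ then gives that the rational function
\[
R(x) = \frac{\prod_{i=1}^p (x+a_i)}{(x+1)\prod_{j=1}^q(x+b_j)} \in \C(x)
\]
takes rational values at every non-negative integer. The crux of the proof is to promote this pointwise rationality to the statement $R(x) \in \Q(x)$.

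To achieve this, the plan is to invoke a rational interpolation argument. Setting $N := p+q+2$, the $\Q$-linear system
\[
\tilde P(n) = R(n)\,\tilde Q(n), \qquad n = 0, 1, \ldots, N-1,
\]
in the coefficients of polynomials $\tilde P, \tilde Q \in \Q[x]$ of degrees at most $p$ and $q+1$ has $p + q + 3$ unknowns and only $N$ equations over $\Q$, hence admits a nontrivial solution $(\tilde P, \tilde Q) \in \Q[x]^2$. The polynomial $\tilde P(x)(x+1)\prod_j(x+b_j) - \tilde Q(x)\prod_i(x+a_i)$ then has degree at most $p + q + 1 < N$ yet vanishes at $N$ distinct points, so it is identically zero. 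Because $\tilde Q \not\equiv 0$ (otherwise $\tilde P$ would have $N > p$ roots and also vanish), this identity yields $R(x) = \tilde P(x)/\tilde Q(x) \in \Q(x)$.

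Finally, I would write $R = P/Q$ in lowest terms with $P, Q \in \Q[x]$ monic. Because the standing assumption $a_i \neq b_j$ forbids a linear factor $(x + a_i)$ from cancelling against a linear factor $(x + b_j)$, the only possible common factors between the explicit numerator $\prod_i(x+a_i)$ and denominator $(x+1)\prod_j(x+b_j)$ are copies of $(x+1)$. Hence every $a_i$ with $a_i \neq 1$ survives as a root of $P \in \Q[x]$ and every $b_j$ with $b_j \neq 1$ as a root of $Q \in \Q[x]$, placing all such parameters in $\overline{\Q}$; the remaining parameters are equal to $1$ and are trivially algebraic. The principal technical hurdle is the descent from $\C(x)$ to $\Q(x)$, handled above by interpolation; alternatively one could apply any automorphism $\sigma$ of $\C$ fixing $\Q$ to deduce $R^\sigma(n) = R(n)$ at infinitely many $n$, whence $R^\sigma = R$, forcing the coefficients of $R$ (in the unique reduced form with monic denominator) into $\Q$.
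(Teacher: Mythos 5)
Your proof is correct and takes essentially the same route as the paper: both form the ratio $R$ of consecutive coefficients, observe that it is a rational function taking values in $\Q$ on all of $\N$, deduce $R\in\Q(z)$, and read off the parameters as roots of the numerator and denominator. The paper compresses the middle step into ``standard linear algebra arguments,'' which your interpolation system (and the automorphism alternative) spells out explicitly, and your closing discussion of which factors can cancel is just a more careful version of the paper's final sentence.
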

\begin{proof}
    The recursion for the coefficient sequence $(u_n)_{n\geq0}$ of $F(x)$ reads as follows:
    \[
        u_{n+1} \prod_{i=1}^{q} (b_i+n)  =  u_n \prod_{i=1}^{p} (a_i+n).
    \]
    Hence, $R(z) \coloneqq \prod_{i=1}^{p} (a_i+z)/\prod_{i=1}^{q} (b_i+z) \in \C(z)$ has the property that $R(n) \in \Q$ for all $n \in \N$. From here it is easy to see that $R(z) \in \Q(z)$ by standard linear algebra arguments.
    Since the parameters $a_j$ and $b_k$ are the roots of the numerator and denominator of $R(z) \in \Q(z)$, they are algebraic numbers.
\end{proof} 

Recall that a function $f(x)=\sum_{n\geq 0}u_nx^n \in \Q\ps{x}$ is called {\it almost integral} if $f(Mx)-f(0) \in \Z\ps{x}$ for some $M>0$. Equivalently, there  are integers $\alpha, \beta\in \Z\setminus\{0\}$, such that $\beta f(\alpha x)\in \Z\ps{x}$. If in addition, the convergence radius of $f(x)$ is non-zero and finite, the power series is called {\it globally bounded} (see~\cite{Christol86}). Note that any almost integral function, which is not a polynomial, has finite radius of convergence. A classical theorem attributed to Eisenstein \cite{Eisenstein52}, \cref{thm:Eisenstein}, states that any algebraic power series in $\Q\ps{x}$ is almost integral (the first complete proof was given by Heine~\cite{Heine1853, Heine1854}). In particular, the convergence radius of a non-polynomial algebraic power series is finite and it is well-known that it is non-zero. 

\begin{lem} \label{lem:p=q+1}
  If $F(x) \in \Q\ps{x}$ is algebraic, then $p=q+1$.  
\end{lem}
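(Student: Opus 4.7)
The plan is to use the asymptotics of the coefficient ratio of $F(x)$ together with the fact (mentioned just before the statement) that a non-polynomial algebraic power series in $\Q\ps{x}$ has a radius of convergence that is both finite (Eisenstein, \cref{thm:Eisenstein}) and non-zero. Since we are assuming $a_j\notin -\N$ for all $j$, the function $F(x)$ is not a polynomial by \cref{lem:poly}, so both bounds on the radius of convergence apply.

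First I would write out the coefficient ratio explicitly:
\[
\frac{u_{n+1}}{u_n} \;=\; \frac{(a_1+n)\cdots(a_p+n)}{(b_1+n)\cdots(b_q+n)(n+1)} \;\sim\; n^{\,p-q-1} \qquad (n\to\infty).
\]
This uses only that $a_j,b_k$ are fixed complex numbers and that no $b_k$ is a negative integer (so no denominator vanishes for large $n$). Now I would split into the two cases that would contradict $p=q+1$.

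If $p \le q$, then $p-q-1 < 0$, so $|u_{n+1}/u_n|\to 0$. Hence $F(x)$ is an entire power series, so its radius of convergence is $+\infty$. Since $F(x)$ is not a polynomial, this contradicts Eisenstein's theorem, which forces the radius of convergence of an algebraic element of $\Q\ps{x}$ that is not a polynomial to be finite. If $p \ge q+2$, then $p-q-1 \ge 1$, so $|u_{n+1}/u_n|\to \infty$; hence the radius of convergence of $F(x)$ is $0$. But an algebraic power series is the Taylor expansion at $0$ of a function analytic in a neighborhood of $0$, so its radius of convergence is positive, a contradiction. The only remaining case is $p=q+1$.

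The only genuine content is the link between algebraicity and the two-sided bound $0 < R < \infty$ on the radius of convergence; no step in the argument looks like the main obstacle, so I do not anticipate any difficulty beyond carefully stating the ratio asymptotic.
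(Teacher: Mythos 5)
Your proof is correct and takes essentially the same route as the paper: both arguments reduce to showing that the radius of convergence is $\infty$ when $p\le q$ and $0$ when $p\ge q+2$, then invoke that a non-polynomial algebraic series in $\Q\ps{x}$ has radius of convergence strictly between $0$ and $\infty$. The paper derives the coefficient asymptotics via Stirling's formula for $(a)_n/n!$ while you use the ratio $u_{n+1}/u_n\sim n^{p-q-1}$, but this is only a cosmetic difference.
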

\begin{proof}
For any $a\in\C \setminus (-\N)$ Stirling's formula implies that $(a)_n/n! \sim n^{1-a}/\Gamma(a)$ as $n \to \infty$, hence 
 \[
    u_n \sim \frac{\Gamma(b_1)\cdots\Gamma(b_q)}{\Gamma(a_1)\cdots\Gamma(a_p)}  n^{\sigma} (n!)^{p-q-1}, 
 \]
 where $u_n$ is the $n$-th coefficient of $F(x)$ and $\sigma = p-q-\sum_{j=1}^p a_j + \sum_{k=1}^q b_k$. Therefore, the radius of convergence $R \in \R \cup \{\infty\}$ of $F(x)$ computes to  
 \[R=\left( \limsup_{n\to \infty}|u_n|^{1/n} \right)^{-1}=\begin{cases}
     0 & p>q+1\\
     1 & p=q+1\\
     \infty & p<q+1.
 \end{cases}\]
With the remark from above this shows that $p=q+1$ is required.
\end{proof}

The Lemmata~\ref{lem:algnumber} and \ref{lem:p=q+1} allow us to assume from now on that $p=q+1$ and that all the parameters $a_1\dots,a_p, b_1\dots,b_{p-1}$ of $F(x)$ are algebraic numbers. \\

Write $\theta=x\partial=x\cdot \frac{{\rm d}}{{\rm d}x}$ for the Euler derivative and denote by  
\[
    H(\theta) = H([a_1,\dots,a_p],[b_1,\dots,b_{p-1}];\theta) \coloneqq  x \prod_{j=1}^{p} (\theta + a_j) - \theta \prod_{k=1}^{p-1} (\theta + b_k - 1),
\]
the \textit{hypergeometric differential operator}. Then it classically holds that
\begin{equation} \label{eq:hpergeom}
H([a_1,\dots,a_p],[b_1,\dots,b_{p-1}];\theta) \pFq{p}{p-1}{a_1,\dots,a_p}{b_1,\dots,b_{p-1}}{x} = 0. 
\end{equation}
The singularities of $H(\theta)$ are $0,1$ and $\infty$ and the operator is Fuchsian. Moreover, one can easily read off the local exponents at $x=0$ and $x=\infty$: the indicial polynomial at $x=0$ is given by $\chi_0(t)=t (t+b_1-1)\cdots (t+b_{p-1}-1)$ and consequently the local exponents are $0, 1-b_1,\ldots, 1-b_{p-1}$; the indicial polynomial at $x=\infty$ is $\chi_\infty(t)=(t-a_1)\cdots (t-a_p)$ and here the local exponents read $a_1,\ldots, a_p$. The local exponents at $1$ are given by $0,1,\dots,p-2$ and $\sum_{k=1}^{p-1} b_k-\sum_{j=1}^p a_j$ \cite{Pochhammer88}. 

 We will also use the following \emph{contiguous relation} which can be verified by a direct computation:
 \begin{align}\label{eq:wellknown}
 (\theta+a_1)F(x) &= a_1 \cdot \pFq{p}{p-1}{a_1+1,a_2, \ldots, a_p}{b_1, b_2, \ldots, b_{p-1}}{x}.
\end{align}

\begin{lem} \label{lem:span}
Assume that $F(x) = {}_p{F}_{p-1}([a_1,,\ldots, a_p],[b_1,\ldots, b_{p-1}];x)$ is hypergeometric and let $\alpha \in \overline{\Q} \setminus (\{a_1,\ldots, a_p\} \cap \{b_1-1,\ldots ,b_{p-1}-1, 0\})$. Then $F(x)$ is algebraic if and only if $(\theta+\alpha)F(x)$ is algebraic.
\end{lem}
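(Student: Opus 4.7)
The forward direction is immediate: algebraic power series are closed under differentiation and under $\overline{\Q}(x)$-linear combinations, so if $F$ is algebraic then so is $(\theta+\alpha)F = xF' + \alpha F$. Only the converse requires work.

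For the converse, the plan is to perform a right Euclidean division of the hypergeometric operator
\[ H(\theta) = x\prod_{j=1}^p(\theta+a_j) - \theta\prod_{k=1}^{p-1}(\theta+b_k-1) \]
by $(\theta+\alpha)$ in the Ore algebra with the commutation $\theta x = x\theta + x$. Ordinary polynomial division yields $p(\theta) = q(\theta)(\theta+\alpha) + p(-\alpha)$ for every $p(\theta)\in\overline{\Q}[\theta]$; applying this separately to the two scalar polynomials $\prod_j(\theta+a_j)$ and $\theta\prod_k(\theta+b_k-1)$ appearing in $H(\theta)$ and collecting terms will give a factorization
\[ H(\theta) = P(x,\theta)\cdot(\theta+\alpha) + R(x), \]
where $P(x,\theta)\in\overline{\Q}[x,\theta]$ and the remainder is linear in $x$:
\[ R(x) = x\prod_{j=1}^p(a_j-\alpha) + \alpha\prod_{k=1}^{p-1}(b_k-1-\alpha). \]
The hypothesis on $\alpha$ is tailor-made so that $R(x)\not\equiv 0$: its coefficient of $x$ vanishes iff $\alpha\in\{a_1,\ldots,a_p\}$ and its constant term vanishes iff $\alpha\in\{0,b_1-1,\ldots,b_{p-1}-1\}$, so $R\equiv 0$ precisely when $\alpha$ lies in the intersection that is explicitly excluded from the lemma.

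Once this is in hand, we apply both sides of the identity to $F$. Using $H(\theta)F=0$ (equation~(\ref{eq:hpergeom})) and writing $G(x)\coloneqq(\theta+\alpha)F(x)$, we get
\[ 0 = P(x,\theta)G(x) + R(x)F(x), \]
and therefore $F(x) = -P(x,\theta)G(x)/R(x)$. Because algebraic power series in $\overline{\Q}\ps{x}$ are closed under differentiation and under multiplication and division by elements of $\overline{\Q}(x)$, algebraicity of $G$ transfers to $F$, which completes the proof.

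The main obstacle is purely the bookkeeping in the Ore-algebra division: one must identify $R(x)$ correctly and verify that it is nonzero exactly under the stated hypothesis on $\alpha$. Everything else—using the hypergeometric equation to eliminate $H(\theta)F$ and invoking closure properties of algebraic functions—is formal. As a pleasant byproduct, taking $\alpha=0$ recovers \cref{cor:faf'a} via (\ref{eq:wellknown}), since then $(\theta+\alpha)F = \theta F = xF'$ and algebraicity of $xF'$ is equivalent to algebraicity of $F'$.
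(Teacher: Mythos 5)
Your proposal is correct and follows essentially the same route as the paper's proof: right Euclidean division of $H(\theta)$ by $(\theta+\alpha)$, identification of the remainder as the formal evaluation $H(-\alpha)=x\prod_j(a_j-\alpha)+\alpha\prod_k(b_k-1-\alpha)$, and the observation that this linear polynomial in $x$ vanishes exactly when $\alpha$ lies in the excluded intersection. The only difference is cosmetic — you carry out the division term by term on the two scalar factors rather than invoking non-commutative Euclidean division wholesale — and the conclusion via closure of algebraic series under $P(x,\theta)$ and division by the nonzero $R(x)$ matches the paper.
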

\begin{proof}
    Clearly the forward implication holds. For the backward implication we use the Euclidean division of the differential operator $H(\theta)$ by $(\theta+\alpha)$. We find $Q(\theta),$ a differential operator of order $p-1$, such that
    \begin{equation} \label{eq:Euclid}
        H(\theta)=Q(\theta)(\theta+\alpha)+H(-\alpha), 
    \end{equation}
    where, by abuse of notation, we write 
    \[
        H(-\alpha)=x\prod_{j=1}^p(-\alpha+a_j) + \alpha \prod_{k=1}^{p-1}(-\alpha+b_k-1).
    \]
    Indeed, the Euclidean division works independently from commutativity (which is clearly not given in the ring of differential operators), thus performing the division by an order one operator $(\theta+\alpha)$, gives the (formal) evaluation of the operator $H$ at $-\alpha$ as remainder.

    Applying \eqref{eq:Euclid} to $F(x)$ gives $0=Q(\theta)(\theta+\alpha)F(x)+H(-\alpha)F(x)$. The polynomial $H(-\alpha)$ vanishes if and only if $\alpha\in \{a_1,\ldots, a_p\} \cap \{b_1-1,\ldots, b_{p-1}-1, 0\}$, therefore we obtain that for all other values of $\alpha$ the algebraicity of $(\theta+\alpha)F(x)$ implies that $F(x)$ is algebraic as well.
\end{proof}

The following corollary to \cref{lem:span} allows us to reduce the algebraicity question for hypergeometric functions to the contracted case. 
Recall that $F(x)$ is called \emph{contracted} if the corresponding $\F(x)$ is contracted which means that $c_j - d_k \not \in \N$ for all $j,k$. The contraction of $F(x)$, denoted by $F^{\cc}(x)$, is defined in the beginning of \cref{sec:result}.

\begin{cor} \label{cor:falgfcalg}
The function $F(x)$ is algebraic if and only if $F^{\cc}(x)$ is algebraic.
\end{cor}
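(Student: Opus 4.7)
The plan is to eliminate, one at a time, each pair of parameters $(c,d)$ of $F(x)$ (viewed in $\F$-form, where the bottom parameter list is $\{b_1,\ldots,b_q,1\}$) with $c-d\in \N$, invoking \cref{lem:span} at every step. First, if $p\neq q+1$, then by \cref{lem:p=q+1}, $F(x)$ is transcendental; moreover, contraction only removes pairs, so $p^\cc - q^\cc = p-q$, and $F^\cc(x)$ is transcendental as well. The claim is then vacuous, so we may assume $p=q+1$.

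I would induct on the number of pairs $(a_j, \beta)$ with $\beta \in \{b_1,\ldots,b_{p-1},1\}$ and $a_j - \beta \in \N$. The base case of zero pairs gives $F=F^\cc$. For the inductive step, relabel so that $(a_1, \beta)$ achieves the minimal difference $m\coloneqq a_1-\beta \in \N$ (where $\beta$ is either one of the original $b_k$'s or the auxiliary $1$), and let $F_1(x)$ denote the function obtained by deleting $a_1$ and $\beta$ from the corresponding $\F$-multisets. The identity $(a_1)_n/(\beta)_n = (\beta+n)_m/(\beta)_m$, combined with the coefficient-wise action of $\theta$, yields
\[
F(x) = \frac{1}{(\beta)_m}\prod_{i=0}^{m-1}(\theta + \beta + i)\, F_1(x).
\]
By the contiguous relation~\eqref{eq:wellknown}, the intermediates $H_i \coloneqq \frac{1}{(\beta)_i}\prod_{j=0}^{i-1}(\theta+\beta+j)\,F_1(x)$ are again ${}_pF_{p-1}$-type hypergeometric functions with $H_0=F_1$, $H_m=F$, top parameters $\{\beta+i, a_2,\ldots,a_p\}$ and bottom parameters $\{b_1,\ldots,b_{p-1}\}$, linked by $H_{i+1} = \frac{1}{\beta+i}(\theta+\beta+i)\,H_i$.

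\cref{lem:span} applied with $\alpha=\beta+i$ shows $H_{i+1}$ algebraic iff $H_i$ algebraic, provided $\alpha$ avoids the forbidden intersection $\{\beta+i,a_2,\ldots,a_p\}\cap\{b_1-1,\ldots,b_{p-1}-1,0\}$; iterating gives $F$ algebraic iff $F_1$ algebraic, and the induction then closes since $F_1^\cc=F^\cc$ (the remaining contractions of $F$ and of $F_1$ coincide, the minimal-difference pair having been removed first). The real obstacle is verifying this non-forbidden condition at each step, which is where the minimality of $m$ is essential: since $\beta\notin -\N$ and $i\geq 0$, one cannot have $\beta+i = 0$, so the only threat is $b_k = \beta+i+1$ for some $k$. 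But then $(a_1, b_k)$ would be a pair of $F$ with difference $a_1 - b_k = m - i - 1 \in \{0,\ldots,m-1\}\subset \N$, strictly less than $m$, contradicting minimality. Hence \cref{lem:span} applies at every step, completing the reduction.
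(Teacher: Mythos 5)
Your proof is correct and follows essentially the same route as the paper's: peel off one minimal-difference pair at a time via the contiguous relation \eqref{eq:wellknown} and \cref{lem:span}, with $\alpha$ running through $\beta,\beta+1,\dots,\beta+m-1$. The only real difference is that you spell out the verification that each $\alpha$ avoids the forbidden set of \cref{lem:span} --- which is exactly where the minimality of the removed difference is needed --- a check the paper's proof leaves implicit.
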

\begin{proof}
We will show that algebraicity is preserved in each step of the contraction process. Write $F(x) = \F([c_1,\dots,c_r],[d_1,\dots,d_{r}];x)$. Assume without loss of generality that $c_1=d_1+\ell$ for some $\ell\in \N$ and this difference is minimal among pairs of parameters $c_j, d_k$ with $d_k-c_j\in \N$. We apply \cref{lem:span} together with relation \eqref{eq:wellknown} a total of $\ell$ times (choosing $\alpha = c_1 + i$ for $i=0,\dots,\ell-1$ consecutively) to show that 
\[
\bigF{d_1, c_2, \ldots, c_{r}}{d_1,d_2,\ldots, d_{r}}{x} = \bigF{c_2, \ldots, c_{r}}{d_2,\ldots, d_{r}}{x}
\]
is algebraic if and only if $F(x)$ is algebraic. Inductively we obtain that $F(x)$ is algebraic if and only if $F^{\cc}(x)$ is algebraic.
\end{proof}
\begin{rem} \label{rem:andre}
    For \cref{lem:span} and \cref{cor:falgfcalg} we had a cumbersome but still elementary proof and we thank the anonymous reviewer for the much more elegant proof presented above. Here is yet another viewpoint:
    
    André proved in \cite[p.149]{Andre89} that if the primitive of an algebraic power series is globally bounded, then it is algebraic. This statement in the setting of Puiseux series gives an alternative proof that a hypergeometric function $F(x)$ with rational parameters is algebraic if and only if $F^{\cc}(x)$ is algebraic. Indeed, assume that $(\theta + \alpha)F(x)$ is algebraic, then $x^{\alpha-1} (\theta+\alpha) F(x)=(x^{\alpha }F'(x)+\alpha x^{\alpha-1}F(x))=(x^{\alpha}F(x))'$ is algebraic as well, if $\alpha \in \Q$. One proceeds as in the proof of \cref{cor:falgfcalg} and uses that, by Christol's interlacing criterion (\cref{thm:Christol}) and (\ref{eq:wellknown}), $F(x)$ is globally bounded if and only if $(\theta+c_1+i)F(x)$ is globally bounded. Therefore $F(x)$ stays algebraic in each contraction step.
\end{rem}


\begin{lem} \label{lem:mindifop}
    Assume that $F(x)$ is contracted. Then the least order homogeneous differential equation satisfied by $F(x)$ is given by the hypergeometric differential equation (\ref{eq:hpergeom}). 
\end{lem}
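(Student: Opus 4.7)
The plan is to argue by contradiction: I suppose there exists a nonzero differential operator $L$ of order $m<p$ with $L\cdot F=0$. Writing $L$ in its unique Ore normal form $L=\sum_{i=0}^{I} x^i P_i(\theta)$ with $P_i\in\C[\theta]$ and $P_I\not\equiv 0$, and applying $L$ coefficient-wise to $F(x)=\sum_{n\geq 0} u_n x^n$, the identity $L\cdot F=0$ becomes the scalar recurrence $\sum_{i=0}^I P_i(n-i)\,u_{n-i}=0$ for every $n\geq 0$. Since $a_j\notin -\N$ makes every $u_n$ nonzero, I can substitute the closed form
\[
    \frac{u_{n-i}}{u_n} \;=\; \frac{(n-i+1)_i\prod_{k=1}^{p-1}(b_k+n-i)_i}{\prod_{j=1}^p(a_j+n-i)_i}
\]
and clear denominators by multiplying through by $\prod_j(a_j+n-I)_I$, converting the recurrence into the polynomial identity
\[
    \Phi(n) \;:=\; \sum_{i=0}^{I}P_i(n-i)\,(n-i+1)_i\prod_{k=1}^{p-1}(b_k+n-i)_i \prod_{j=1}^{p}\prod_{s=0}^{I-i-1}(a_j+n-I+s) \;\equiv\; 0.
\]

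The crucial step is to evaluate $\Phi$ at the distinguished values $n_{j_0}:=-a_{j_0}+I$ for each $j_0\in\{1,\ldots,p\}$. At such a point, the $j=j_0$ slice of the inner product becomes $\prod_{s=0}^{I-i-1}s$, which contains the factor $0$ precisely when $i\leq I-1$, so every term except the one with $i=I$ is killed. The surviving $i=I$ term simplifies to
\[
    P_I(-a_{j_0})\cdot(-a_{j_0}+1)_I\cdot\prod_{k=1}^{p-1}(b_k-a_{j_0})_I,
\]
and the contracted hypothesis is exactly what makes both Pochhammer factors nonzero: $(-a_{j_0}+1)_I=0$ would require $a_{j_0}\in\{1,\ldots,I\}\subset\N_{>0}$, forbidden by contractedness (via the virtual bottom parameter $b_p=1$), while $(b_k-a_{j_0})_I=0$ would require $a_{j_0}-b_k\in\{0,\ldots,I-1\}\subset\N$, also excluded. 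Hence $P_I(-a_{j_0})=0$ for every $j_0$. If the top parameters $a_1,\ldots,a_p$ are pairwise distinct, $P_I$ then has $p$ distinct roots, and $\deg P_I\leq m<p$ forces $P_I\equiv 0$, contradicting the choice of $I$.

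In the remaining case where some value $\alpha$ occurs among the $a_j$'s with multiplicity $k$, the inner $j$-product vanishes to order at least $k$ at $n_0=-\alpha+I$ whenever $i<I$, so the first $k-1$ derivatives of $\Phi$ at $n_0$ still receive contribution only from the $i=I$ term. An induction on $t=0,1,\ldots,k-1$ using Leibniz's rule on $\Phi^{(t)}(n_0)=0$ then yields $P_I^{(t)}(-\alpha)=0$, so $-\alpha$ is a $k$-fold root of $P_I$; summing across distinct parameter values once again forces $\deg P_I\geq p$ and gives the same contradiction. I expect this multiplicity bookkeeping to be the only (mild) technical point, but it follows cleanly once the distinct-parameter case is in place.
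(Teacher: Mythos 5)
Your overall strategy is a legitimate mirror image of the paper's: you extract roots of the \emph{trailing} $\theta$-coefficient $P_I$ at the top parameters (the exponents at infinity), whereas the paper extracts poles of the \emph{leading} recurrence coefficient at the bottom parameters (the exponents at the origin). In the generic case your computation is correct, including the multiplicity bookkeeping. However, there is a genuine gap in the step where you claim that $(-a_{j_0}+1)_I\neq 0$ is ``forbidden by contractedness via the virtual bottom parameter $b_p=1$.'' Contractedness of $F$ is defined through the associated $\F$, and when some top parameter $a_m$ equals $1$ it \emph{cancels} against the virtual bottom $1$ (cf.\ the conversion $h(x)={}_2F_1([1/2,1],[1/3];x)=\F([1/2],[1/3];x)$ in \cref{ex:reduced-contracted}), so no condition $a_j-1\notin\N$ is imposed. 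Contracted functions with $a_m=1$ — and even with further top parameters in $\N_{>0}$, e.g.\ ${}_3F_2([2,1,1/2],[1/3,1/4];x)=\F([2,1/2],[1/3,1/4];x)$ — are squarely within the scope of the lemma (the reduced Gessel function ${}_3F_2([5/6,1/2,1],[5/3,2];x)$ is another instance), and the paper's own proof devotes a separate paragraph to exactly this sub-case.

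In that sub-case your argument breaks down: for $a_{j_0}\in\{1,\dots,I\}$ the surviving $i=I$ term contains the factor $(1-a_{j_0})_I=0$, so the evaluation of $\Phi$ at $n_{j_0}=-a_{j_0}+I$ reads $0=0$ and yields no root of $P_I$; moreover both the $i=I$ term and the $i<I$ terms then vanish to order exactly one at $n_{j_0}$, so differentiating does not isolate $P_I(-a_{j_0})$ either. You are left with at most $p-1$ roots of $P_I$ against the bound $\deg P_I\le p-1$, hence no contradiction. The paper rescues the analogous degeneracy on its side by an input that has no obvious counterpart in your setup: the cancelled factor sits in the \emph{leading} recurrence coefficient, which is the indicial polynomial at $0$, and since $F$ is a power series with $F(0)=1$, the exponent $0$ forces an extra factor $(t+\ell)$ there. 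Your $P_I$ is instead tied to the local behaviour at infinity, where $F$ gives you no a priori exponent, so you would need a new idea (or a switch to the origin-side argument) to close this case.
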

The following proof is an adaptation of the proof of Lemma 4.2 in \cite{BBH88}.
\begin{proof}
    We have $F(x) = {}_{p}F_{p-1}([a_1,\dots,a_p],[b_1,\dots,b_{p-1}];x)$ and define $b_p \coloneqq 1$. Suppose that $F(x) = \sum_{n \geq 0} u_n x^n$ satisfies a linear differential equation of smaller order. Then there exists a recurrence relation for the sequence of coefficients $(u_n)_{n\geq0}$ of the form
    \[A_\ell(n)u_{n+\ell}+\ldots+ A_1(n)u_{n+1}+A_0(n)u_n=0,\]
    where $A_0(t), \dots, A_\ell(t) \in \Q[t]$ are polynomials with degree at most $p-1$. For any $i \geq 0$
    \[
    u_{n+i} = \frac{\prod_{j=1}^p(a_j+n)\cdots (a_j+n+i-1)}{\prod_{k=1}^{p}(b_k+n)\cdots (b_k+n+i-1)} u_n,
    \]
    thus we have
   \[
    \left(A_\ell(n)\frac{\prod_{j=1}^p(a_j+n)\cdots (a_j+n+\ell-1)}{\prod_{k=1}^{p}(b_k+n)\cdots (b_k+n+\ell-1)}+\ldots+A_1(n)\frac{\prod_{j=1}^{p}(a_j+n)}{\prod_{k=1}^{p}(b_k+n)}  +A_0(n)\right)u_n=0 ,
    \]
    for all $n \geq 0$. By assumption $F(x)$ is not a polynomial, thus $u_n\neq 0$ for infinitely many $n\in \N$. Therefore, we obtain the  identity of rational functions:
    \[
    A_\ell(t)\frac{\prod_{j=1}^p(a_j+t)\cdots (a_j+t+\ell-1)}{\prod_{k=1}^{p}(b_k+t)\cdots (b_k+t+\ell-1)}+\ldots+A_1(t)\frac{\prod_{j=1}^{p}(a_j+t)}{\prod_{k=1}^{p}(b_k+t)}  +A_0(t)=0 .
    \]
    We will show that for some $1 \leq \kappa \leq p$, the leftmost summand has at least one pole at $t=1-b_{\kappa}-\ell$ of a higher order than all the other summands. This clearly gives a contradiction. 
    
    For each $1 \leq k \leq p$ the order of the pole of 
    \[\frac{1}{\prod_{k=1}^{p}(b_k+t)\cdots (b_k+t+i-1)}\]
    at $t = 1-b_k-\ell$ for $i<\ell$ is strictly smaller than the order of that pole if $i=\ell$. We distinguish two cases: First, assume $a_{j}\neq 1$ for all $j$. Then, none of the factors $(b_k+t+\ell-1)$ from the denominator of the fraction can be canceled with a factor in the numerator, because if $a_j\equiv b_k\bmod \Z$, then $a_j<b_k$, as $F(x)$ is contracted by assumption. Moreover, at most $p-1$ of the factors $(b_k+t+\ell-1)$ might divide $A_\ell(t)$ because of its degree. This means that the first summand has a pole at one of the numbers $(1-b_k-\ell)$ of a higher order than all the other summands.
    
    Now assume that $a_m=1$ for some $m$, then $b_k\neq 1$ for $k\neq p$. The factor $(b_p+t+\ell-1)=(t+\ell)$ from the denominator in the first summand can be canceled with $(a_m+t+\ell-1)=(t+\ell)$ from the numerator. Note that $A_\ell(t-\ell)$ is the indicial polynomial of the differential equation. Thus $A_\ell(t)$ is divisible by $(t+\ell)$, as $0$ is a local exponent of any differential equation possessing a power series solution of order $0$ at the origin. This factor $(t+\ell)$ cannot cancel an additional factor of the denominator in the first summand, because $b_k\neq 1$ for $k\neq p$. Therefore the same reasoning as above applies and we arrive at the same conclusion.
\end{proof}

We can now show that a contracted hypergeometric function with irrational parameters cannot be an algebraic function.

\begin{prop} \label{prop:notrational}
    Let $F(x)$ be a contracted hypergeometric function. If $a_j \not \in \Q$ or $b_k \not \in \Q$ for some $j$ or $k$, then $F(x)$ is not algebraic.
\end{prop}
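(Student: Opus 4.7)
My plan is to exploit the minimality of the hypergeometric operator, which is guaranteed for contracted functions by \cref{lem:mindifop}. We may assume $p=q+1$, since otherwise $F(x)$ is already transcendental by \cref{lem:p=q+1}. Under the contracted hypothesis, \cref{lem:mindifop} tells us that the hypergeometric differential equation~(\ref{eq:hypergeomdiffeq}) is the least-order homogeneous ODE satisfied by $F(x)$, so by the consequence of \cite[Prop.~2.5]{Singer80} cited earlier in \cref{sec:history}, if $F(x)$ is algebraic then \emph{every} solution of this equation must be algebraic over $\overline{\Q}(x)$. The proof then reduces to the following observation: all local exponents of the hypergeometric operator must be rational, which forces $a_j, b_k \in \Q$ in view of the explicit descriptions of the local exponents given just after~(\ref{eq:hypergeomdiffeq}).

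The central step is a standard Frobenius–Puiseux argument at the singularity $x=0$, where the local exponents are $0, 1-b_1, \dots, 1-b_{p-1}$. I would partition these into equivalence classes modulo $\Z$; for each class, Frobenius theory provides a logarithm-free local solution of the form $y(x)=x^{\rho_{\max}}h(x)$ where $h(x)$ is a convergent power series with $h(0)\neq 0$ and $\rho_{\max}$ is the largest exponent in the class. Since $y(x)$ is algebraic by the previous paragraph, its Puiseux expansion at $0$ involves only rational exponents, forcing $\rho_{\max}\in \Q$. All other exponents in the class differ from $\rho_{\max}$ by integers, so all are rational. Running this over all classes gives $1-b_k\in \Q$, hence $b_k\in \Q$ for every $k$.

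The same argument applied at the singularity $x=\infty$, whose local exponents are $a_1,\dots, a_p$, yields $a_j\in \Q$ for every $j$. Together with the preceding conclusion, this contradicts the assumption that some parameter is irrational and completes the proof.

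The main technical subtlety is that the contracted hypothesis does \emph{not} prevent local exponents from coinciding modulo $\Z$ (for instance, two $a_j$'s, or two $b_k$'s, may still differ by an integer). The clean way around this is the observation that the \emph{top} exponent of each equivalence class always yields a logarithm-free power-series solution, whereas lower exponents in the same class may a priori produce genuinely logarithmic—and hence transcendental—basis elements. Once this is set up, the invocation of Puiseux's theorem to conclude rationality of the leading exponent is immediate.
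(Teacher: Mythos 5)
Your proof is correct and follows essentially the same route as the paper: minimality of the hypergeometric operator via \cref{lem:mindifop}, algebraicity of all solutions via \cite[Prop.~2.5]{Singer80}, and rationality of the local exponents at $0$ and $\infty$ forced by Puiseux expansions. The paper states the last step more tersely ("the operator has at least one non-algebraic solution if some $b_k\not\in\Q$"), whereas you spell out the Frobenius argument for the top exponent of each congruence class modulo $\Z$ — a worthwhile elaboration, but not a different method.
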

\begin{proof}
    According to Lemma \ref{lem:mindifop} the differential operator of minimal order that annihilates $F(x)$ is given by the hypergeometric one. A basis of solutions of the minimal order differential operator $L_f^{\mathrm{min}}$ annihilating an algebraic function $f(x)$ is given by the algebraic conjugates of $f(x)$, in particular all of the solutions of $L_f^{\mathrm{min}}$ are algebraic (this follows from elementary differential Galois theory, see, for example, \cite[Prop.~2.5]{Singer80} or \cite[\S2]{CoSiTrUl02}). The local exponents of (\ref{eq:hpergeom}) at $0$ are given by $0$ and  $1-b_k$ for all $k=1,\dots,p-1$, therefore the operator has at least one non-algebraic solution, if some $b_k\not \in \Q$. Similarly, the local exponents at $x=\infty$ read $a_j$ for $j=1,\dots,p$, therefore if some $a_j\not \in \Q$ we obtain a non-algebraic local solution at $x=\infty$. 
\end{proof}

\begin{rem}
    As mentioned in \cref{sec:history}, an alternative way to prove \cref{prop:notrational} is to employ a theorem by Galo\v{c}kin~\cite[Thm.~2]{Galockin81} which ensures that a contracted hypergeometric function with irrational parameters cannot be a G-function, in particular it cannot be algebraic.
\end{rem}

    
    


Recall that $F(x)$ is called \emph{reduced} if $a_j - b_k \not \in \Z$ for all $j,k$ and at most one $a_j \in \Z$ which is then equal to 1; equivalently, $\F(x)$ is reduced if $c_j-d_k \not \in \Z$. The following lemma relaxes the hypothesis of~\cref{thm:BeHe} slightly, allowing to decide algebraicity for any reduced hypergeometric function.
\begin{lem} \label{lem:BetterHe}
    Let $F(x)$ be a reduced hypergeometric function with rational parameters. Then $F(x)$ is algebraic if and only if it satisfies the interlacing criterion~(\cref{defi:ICfunc}).
\end{lem}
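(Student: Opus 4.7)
The statement strictly extends \cref{thm:BeHe} only in allowing one top parameter to equal~$1$, so my plan is to split on whether some $a_j\in\Z$. If all $a_j\notin\Z$, the hypotheses of \cref{thm:BeHe} hold verbatim and IC for $F$ coincides with BH's interlacing of $\{\lambda a_1,\ldots,\lambda a_p\}$ with $\{\lambda b_1,\ldots,\lambda b_{p-1},\lambda\}$, so this case is immediate. The remaining task is the case where (necessarily uniquely) $a_p=1$; then $F$ admits the balanced $\F$-representation $F=\F([a_1,\ldots,a_{p-1}],[b_1,\ldots,b_{p-1}];x)$, and IC asks for the interlacing of $\{\lambda a_j:j<p\}$ with $\{\lambda b_k:k<p\}$.

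The core idea for $a_p=1$ is to exploit the factorisation
\[
H \;=\; \theta\cdot L,\qquad L\;:=\;x\prod_{j<p}(\theta+a_j)\;-\;\prod_{k=1}^{p-1}(\theta+b_k-1),
\]
of the hypergeometric operator, which follows from the identity $x(\theta+1)=\theta x$. After the substitution $y=x^{1-b_{k_0}}\tilde y$, the equation $Ly=0$ transforms into the standard hypergeometric equation for a ${}_{p-1}F_{p-2}$ with top parameters $a_j+1-b_{k_0}$ (for $j<p$) and bottom parameters $b_k-b_{k_0}+1$ (for $k<p$, $k\neq k_0$), whose parameters satisfy the hypotheses of \cref{thm:BeHe}. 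A direct shift-of-parameters verification shows that, independently of $k_0$, the BH interlacing for this ${}_{p-1}F_{p-2}$ is equivalent to the interlacing of $\{a_j:j<p\}$ with $\{b_k:k<p\}$, i.e.\ to IC for $F$.

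For the forward direction, since $F$ is reduced hence contracted, \cref{lem:mindifop} makes $H$ the minimal-order operator of $F$; standard differential Galois theory (used already in the proof of \cref{prop:notrational}) then forces every solution of $H$ to be algebraic, in particular each companion solution $\tilde y_{k_0}$, and BH yields IC. In the degenerate situation where the fractional parts of the $b_k$'s or of the $a_j$'s ($j<p$) are not all distinct, $H$ acquires logarithmic solutions so $F$ is not algebraic, and the strict inequalities in \cref{defi:interlace} force IC to fail as well, so the equivalence is preserved. For the converse, assuming IC, BH yields algebraicity (hence global boundedness) of every companion solution $x^{1-b_{k_0}}\tilde y_{k_0}$ of $H$. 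Separately, a direct count verifies Christol's inequality from \cref{thm:Christol} for $F$: the key observation is that $\lambda a_p=\lambda b_p=\lambda$ contribute equally to both sides when $k=p$ and not at all when $k<p$, so IC implies the difference of counts is identically zero. Combined with the finite positive radius of convergence given by \cref{lem:p=q+1}, $F$ is globally bounded, so all solutions of $H$ are, and Katz's theorem (recalled in \cref{sec:history}) promotes this to algebraicity of the entire solution space of $H$, in particular of $F$.

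The main obstacle I expect is the backward direction when $a_p=1$: algebraicity of the $p-1$ companion solutions alone does not force algebraicity of $F$, since $F$ is the particular solution of the inhomogeneous equation $Ly=-\prod_k(b_k-1)$; the combination of Christol's almost-integrality criterion with Katz's theorem therefore seems unavoidable, and the key technical step is the matching of IC with Christol's inequality for $F$.
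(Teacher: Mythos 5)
Your Case~A (no integral top parameter) is fine and agrees with the paper. The problem is in Case~B, and it is concentrated in one step: the asserted equivalence between BH--interlacing for the companion solutions of $L$ and the interlacing criterion for $F$ is false. For the companion indexed by $k_0$, Theorem~\ref{thm:BeHe} asks that $\{\lambda(a_j+1-b_{k_0})\}_{j<p}$ interlace with $\{\lambda(b_k-b_{k_0}+1)\}_{k\neq k_0}\cup\{\lambda\}$; undoing the rotation by $\lambda b_{k_0}$, this is exactly the statement that the points $\{\lambda a_j\}_{j<p}$ and $\{\lambda b_k\}_{k<p}$ \emph{alternate around the circle}, with the anchor placed at $b_{k_0}$ instead of at $1$. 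Since after any $b$-point the next point is automatically an $a$-point once circular alternation holds, BH for the companions is equivalent to circular alternation alone. IC, by contrast, demands in addition that the point of largest fractional part be a lower parameter, which (comparing $\lambda$ with $N-\lambda$) forces some $b_k\in\Z$. Concretely, for $F(x)=\F([1/2],[1/3];x)={}_2F_1([1/2,1],[1/3];x)$ one has $H=\theta L$ with $L$ of order one, the unique companion solution $x^{2/3}(1-x)^{-7/6}$ is algebraic and satisfies BH, yet IC fails and $F$ is transcendental. This breaks your forward direction: when no $b_k$ is a positive integer, algebraicity of all of $\ker L$ gives you circular alternation but not IC, and no information about $\ker L$ can detect the transcendence of $F$, because $F$ is the particular solution of the \emph{inhomogeneous} equation $Ly=-\prod_k(b_k-1)\neq 0$. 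The missing ingredient is arithmetic: in this sub-case one must show directly that $F$ is not globally bounded via Theorem~\ref{thm:Christol} (the inequality \eqref{eq:christolgeq0} cannot hold simultaneously for $\lambda=1$ and $\lambda=N-1$) and conclude by Eisenstein. Your proposal supplies a Christol count only in the backward direction, where it is not the issue.

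Two further remarks. First, your claim that coincident fractional parts among the $b_k$ force logarithmic solutions of $H$ is asserted without proof and is not obviously true for hypergeometric operators; it is also unnecessary. The paper avoids both this and the companion-solution machinery altogether: it works with the balanced form $\F([c_1,\dots,c_r],[d_1,\dots,d_r];x)$ and splits on whether some $d_m$ is a positive integer. If so, it shifts all parameters down by $1$ via the elementary identity \eqref{eq:trivid} until that parameter equals $1$, at which point Theorem~\ref{thm:BeHe} applies verbatim (it tolerates integral and resonant lower parameters); if not, it invokes Christol and Eisenstein as above. Second, your backward direction through Katz's theorem is probably salvageable but is much heavier than needed --- once the integral-lower-parameter dichotomy is in place, the backward direction also follows from Theorem~\ref{thm:BeHe} alone.
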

\begin{proof}
    Write $F(x) = \F([c_1,\dots,c_r],[d_1,\dots,d_r];x)$. We shall distinguish two cases, according to the occurrence of integer parameters. 

    If $d_m \in \Z$ for some $m \in \Z$ assume without loss of generality that $d_r \in \N$ and that each other integral $d_k$ is larger.  Since $\F(x)$ is reduced with rational parameters, it holds that $c_j,c_j - d_k \in \Q \setminus \Z$ for $j,k=1,\dots,r$. If now $d_r=1$ then $\F(x)={}_{r}F_{r-1}([c_1,\ldots, c_r],[d_1,\ldots, d_{r-1}];x)$, \cref{thm:BeHe} applies and concludes this case. If $d_r>1$, apply the trivial identity
    \begin{equation} \label{eq:trivid}
    \bigF{c_1-1,\dots,c_r-1}{d_1-1,\dots,d_r-1}{x} = 1 + \frac{(c_1-1)\cdots(c_r-1)}{(d_1-1)\cdots(d_r-1)} x \bigF{c_1,\dots,c_r}{d_1,\dots,d_r}{x}
    \end{equation}
    a total of $d_r-1$ times and conclude with the first case. Note that \eqref{eq:trivid} reduces all parameters by 1, hence interlacing is not affected.
    
    Now assume that $d_k\not \in \Z$ for all $k$. We have $F(x)={}_{r+1}F_r([c_1,\ldots, c_r, 1],[d_1,\ldots, d_r];x)$. 
    It is easy to see that the condition \eqref{eq:christolgeq0} cannot be fulfilled for all $1 \leq k \leq r$ and $\lambda = 1, N-1$ at once. Thus, by~\cref{thm:Christol}, $F(x)$ is not globally bounded, therefore by~\cref{thm:Eisenstein} it is not algebraic either. Condition \eqref{eq:christolgeq0} is necessary for the interlacing criterion, so in this case the criterion cannot be fulfilled as well.
\end{proof}

Similarly to the contraction we define the \textit{reduction} $\mathcal{F}^\rr(x)$ of $\mathcal{F}(x)$ as follows: remove from $\F^{\cc}(x)$ in each step one of the pairs of parameters $c_j, d_k$, where $d_k - c_j \in \N$ and $d_k - c_j$ is minimal among such differences, until no such pair exists anymore. Of course, $\mathcal{F}^\rr(x)$ is reduced. The reduction $F^{\rr}(x)$ of a hypergeometric function $F(x)$ is defined as the reduction of the corresponding function $\mathcal{F}(x)$.

\begin{lem}\label{lem:falgfralg}
    If $F(x)$ is algebraic, then so is $F^\rr(x)$. 
\end{lem}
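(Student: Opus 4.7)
The plan is to combine Corollary~\ref{cor:falgfcalg} with an induction argument that removes one parameter pair at a time from the contracted function. First, by Corollary~\ref{cor:falgfcalg}, $F$ is algebraic if and only if $F^{\cc}$ is algebraic, so I may replace $F$ by $F^{\cc}$ and assume $F$ is contracted. I then aim to show that every single step of the reduction process preserves algebraicity. Such a step takes the $\F$-representation $\F([c_1,\ldots,c_r],[d_1,\ldots,d_r];x)$ to $\F([c_2,\ldots,c_r],[d_2,\ldots,d_r];x)$ (after relabelling), where $\ell := d_1 - c_1$ is a positive integer by the definition of the reduction procedure.

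The key computation is to iterate the contiguous relation \eqref{eq:wellknown}. One application gives $(\theta + c_1)\F([c_1,c_2,\ldots,c_r],[d_1,\ldots,d_r];x) = c_1\,\F([c_1+1,c_2,\ldots,c_r],[d_1,\ldots,d_r];x)$, and continuing with $c_1+1, c_1+2, \ldots, c_1+\ell-1$ in turn yields
\[
(\theta + c_1 + \ell - 1)\cdots(\theta + c_1)\,\F([c_1,c_2,\ldots,c_r],[d_1,\ldots,d_r];x) = \prod_{i=0}^{\ell-1}(c_1+i)\cdot\F([c_1+\ell,c_2,\ldots,c_r],[d_1,\ldots,d_r];x).
\]
Since $c_1 + \ell = d_1$, the matching top and bottom $d_1$ cancel, and the right-hand side equals $\prod_{i=0}^{\ell-1}(c_1+i)$ times $\F([c_2,\ldots,c_r],[d_2,\ldots,d_r];x)$. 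The scalar is nonzero because $c_1 \notin -\N$ implies $c_1 + i \neq 0$ for $0 \leq i < \ell$. Moreover, the left-hand side is algebraic whenever $F$ is, since the class of algebraic power series is preserved by $\theta = x\,\mathrm{d}/\mathrm{d}x$ and by multiplication by rational functions. Hence $\F([c_2,\ldots,c_r],[d_2,\ldots,d_r];x)$ is algebraic.

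Removing the pair $(c_1, d_1)$ does not create any new difference $c_j - d_k \in \N_{\geq 1}$ among the remaining parameters, so the resulting function is still contracted, and further reduction steps may be performed analogously. An induction on the total number of reduction steps then concludes that $F^{\rr}$ is algebraic. I do not anticipate any substantial obstacle here; the only delicate point is ensuring the scalar factor does not vanish, which follows immediately from the standing assumption that no parameter of a contracted hypergeometric function lies in $-\N$.
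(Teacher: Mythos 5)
Your proof is correct and follows essentially the same route as the paper: pass to $F^{\cc}(x)$ via Corollary~\ref{cor:falgfcalg}, then observe that each reduction step preserves algebraicity by iterating the contiguous relation \eqref{eq:wellknown}, since applying $(\theta+\alpha)$ trivially preserves algebraicity. You merely spell out the iteration and the nonvanishing of the scalar, which the paper leaves implicit.
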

\begin{proof}
    By Corollary~\ref{cor:falgfcalg}, $F^{\cc}(x)$ is algebraic. In each of the steps of the reduction of $F^{\cc}(x)$ algebraicity is preserved due to (\ref{eq:wellknown}).
\end{proof}

Finally we are able to proof that in order for $F(x)$ to be algebraic, its contraction $F^{\cc}(x)$ must be already reduced.

\begin{prop} \label{prop:notreduced}
Let $F(x)$ be an algebraic contracted hypergeometric function with rational parameters. Then $F(x)$ is reduced. 
\end{prop}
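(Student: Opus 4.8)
The plan is to prove the contrapositive in a strong form: assuming $F$ is contracted, has rational parameters and is \emph{not} reduced, I will produce a violation of Christol's inequality \eqref{eq:christolgeq0}, which by \cref{thm:Christol} shows $F$ is not almost integral and hence, by \cref{thm:Eisenstein}, not algebraic. Two inputs run in parallel. First, if $F$ were algebraic it would be almost integral, so writing $S_k(\lambda)$ for the left-hand side of \eqref{eq:christolgeq0}, we would have $S_k(\lambda)\geq 0$ for every admissible $\lambda$ and every bottom index $k$. Second, by \cref{lem:falgfralg} the reduction $F^\rr$ is algebraic, and being reduced and rational it satisfies the interlacing criterion by \cref{lem:BetterHe}. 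I denote by $A,B$ the numerator and denominator parameter multisets of $F$ entering \cref{thm:Christol} (with the $1$ adjoined to $B$), and by $A',B'$ those of $F^\rr$, so that $A=A'\sqcup\{\gamma_i\}$ and $B=B'\sqcup\{\delta_i\}$, where $(\gamma_i,\delta_i)$ are the pairs deleted during reduction; each satisfies $\langle\gamma_i\rangle=\langle\delta_i\rangle$ and $\gamma_i<\delta_i$.

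The combinatorial heart is an observation that uses contractedness: if a top and a bottom parameter share a fractional part $\phi$, then every such top is strictly smaller than every such bottom, as otherwise their difference would lie in $\N$. Non-reducedness means exactly that some $\phi$ is shared by a top and a bottom. I will treat the generic case $\phi\neq 1$ first. Let $\delta^\ast$ be the largest deleted bottom with $\langle\delta^\ast\rangle=\phi$, and evaluate $S_{\delta^\ast}(\lambda)$. Parameters at fractional parts $\neq\phi$ split off into the quantity $D^\ast(\lambda):=\#\{a\in A':\lambda a\preceq\lambda\delta^\ast\}-\#\{b\in B':\lambda b\preceq\lambda\delta^\ast\}$; each deleted pair at a fractional part $\neq\phi$ contributes $0$ (its two indicators coincide); and the deleted pairs at $\phi$ contribute exactly $-\mu$, where $\mu$ is the number of them with $\delta_i=\delta^\ast$. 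Thus $S_{\delta^\ast}(\lambda)=D^\ast(\lambda)-\mu$. Since $F^\rr$ satisfies the interlacing criterion, the excess function $x\mapsto\#\{a\in A':\lambda a\preceq x\}-\#\{b\in B':\lambda b\preceq x\}$ takes only the values $0$ and $1$ at any point of fractional part $\neq 1$, so $D^\ast(\lambda)\in\{0,1\}$; hence almost integrality forces $\mu=1$ and $D^\ast(\lambda)=1$ for every admissible $\lambda$.

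The contradiction then comes from the reflection $\lambda\mapsto N-\lambda$, under which $\langle\lambda x\rangle\mapsto 1-\langle\lambda x\rangle$ for non-integral $x$. Computing $D^\ast(\lambda)+D^\ast(N-\lambda)$ termwise, each non-integral parameter of $A'\cup B'$ lying off $\phi$ contributes $+1$ (top) or $-1$ (bottom), integral parameters contribute $0$, and the unique parameter of $F^\rr$ sitting at $\phi$ (there is at most one, by interlacing) contributes a term determined by its position relative to $\delta^\ast$. Interlacing pins down which parameter survives the reduction at $\phi$ — the smallest top, respectively the largest bottom — and combined with the contracted inequality ``top $<$ bottom at $\phi$'' this places the survivor on the side of $\delta^\ast$ that makes the sum small; together with the auxiliary bound $\beta'-\alpha'\leq 1$ (where $\alpha',\beta'$ count the integral numerator and denominator parameters of $F^\rr$, a bound which itself follows from the interlacing of $F^\rr$, whose numerator has no parameter of fractional part $1$ and whose denominator has at most one) one gets $D^\ast(\lambda)+D^\ast(N-\lambda)\leq 1$ in every case, contradicting $D^\ast\equiv 1$.

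It remains to handle the degenerate case in which the only shared fractional part is $\phi=1$, where the reflection is ill-behaved at integers. Here I will instead evaluate $S_k(\lambda)$ directly at the smallest integral bottom parameter $w\geq 2$, which necessarily exists in this case: only integral parameters then contribute, and the count collapses to $S_w(\lambda)=\beta_1-\alpha$, where $\alpha$ is the number of numerator parameters equal to $1$ and $\beta_1$ the number of denominator parameters equal to $1$; non-reducedness at $1$ is precisely the statement $\alpha>\beta_1$, giving $S_w(\lambda)<0$. I expect the main obstacle to be exactly this bookkeeping of the integral parameters and of the adjoined~$1$: one must carefully track how the spurious cancellations at fractional part $1$ interact with Christol's criterion (where no cancellation is performed) and with the interlacing criterion (which is stated after cancellation). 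This is why the case $\phi=1$ is isolated, and why the bound $\beta'-\alpha'\leq 1$ for $F^\rr$ is the delicate ingredient needed to close the generic case.
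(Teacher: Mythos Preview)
Your overall strategy is sound and the generic case $\phi\neq 1$ is correct, but your argument takes a rather different route from the paper's, and your treatment of the degenerate case $\phi=1$ contains an error.

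\textbf{Comparison with the paper.} The paper avoids your bookkeeping over all deleted pairs by passing not to $F$ but to the intermediate function $G(x)=\F([c_1,\dots,c_r,c],[d_1,\dots,d_r,d];x)$ obtained by undoing only the \emph{last} reduction step. Since algebraicity is preserved at each step, $G$ is globally bounded; and $G$ differs from the interlacing $F^\rr$ by a single pair $(c,d)$ with $d-c\in\N$. A short two-case analysis on where $\langle c\rangle=\langle d\rangle$ falls relative to the interlaced $\langle c_k\rangle,\langle d_k\rangle$ (using $\lambda=1$ in one case, $\lambda=N-1$ in the other) immediately produces a violation of Christol's inequality for $G$. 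Your reflection identity $D^\ast(\lambda)+D^\ast(N-\lambda)\leq \beta'-\alpha'\leq 1$ is a valid and elegant alternative that handles all deleted pairs simultaneously; the paper's trick of looking one step back simply makes the computation shorter and sidesteps the need for a separate $\phi=1$ analysis.

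\textbf{The error at $\phi=1$.} Your claim that ``only integral parameters then contribute'' to $S_w(\lambda)$ is false: every non-integral parameter $p$ has $\langle\lambda p\rangle<1=\langle\lambda w\rangle$ and hence does satisfy $\lambda p\preceq\lambda w$. Consequently your formula $S_w(\lambda)=\beta_1-\alpha$ and your characterisation ``non-reducedness at $1$ is precisely $\alpha>\beta_1$'' are both wrong. For instance, $\F([2],[3];x)$ (i.e.\ ${}_2F_1([1,2],[3];x)$) is contracted, not reduced, and has $\phi=1$; here $\alpha=\beta_1=0$ in your notation, yet $S_3(1)=-1$. The correct computation in the canceled $\F$-form gives
\[
S_w(\lambda)=(\#\text{non-int tops})-(\#\text{non-int bottoms})-(\#\text{int bottoms})=-\,(\#\text{int tops}),
\]
since $|C|=|D|$ and, by contractedness, every integer top is $<w$ while every integer bottom is $\geq w$. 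This is indeed $\leq -1$ in the $\phi=1$ case, so your strategy of evaluating at the smallest integer bottom $w\geq 2$ is salvageable once the formula is repaired.
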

\begin{proof}
Assume that $F(x)$ is not reduced and write $F^\rr(x)=\F([c_1,\ldots, c_r],[d_1,\ldots, d_r];x)$. By Lemma~\ref{lem:falgfralg}, $F^\rr(x)$ is algebraic and it satisfies the hypothesis of  Lemma~\ref{lem:BetterHe}. Therefore, its sets of parameters $c_j$ and $d_k$ interlace on the unit circle.  Thus we can assume
\[
    \langle c_1\rangle < \langle d_1 \rangle < \langle c_2 \rangle <  \ldots < \langle c_r\rangle < \langle d_r\rangle.
\]
Note that exactly one parameter $d_k$ must be an integer in order for the interlacing criterion to be fulfilled. Indeed, if none of the parameters $d_k$ is integral, the criterion for $\lambda=N-1$ cannot be fulfilled, as this corresponds to going backwards on the unit circle. If two of the parameters $d_k$ are integers, the criterion is trivially not fulfilled. Thus, $d_r\in \N$. 

Let $c$ and $d$ be the two parameters removed last in the reduction process of $\F(x)$, in particular, $d-c\in \N$ and $d \preceq c$. Then, $G(x)=\mathcal{F}([c_1,\ldots, c_r, c],[d_1,\ldots, d_r, d];x)$ is contracted and it is also algebraic, since algebraicity is preserved in each reduction step. In particular, it is globally bounded by Eisenstein's Theorem, Theorem \ref{thm:Eisenstein}.

Because $F(x)$ is contracted there are two cases to consider: 
\[
    \langle d_k\rangle \leq \langle c\rangle=\langle d\rangle \leq \langle c_{k+1}\rangle \quad \text{ or } \quad \langle c_k\rangle < \langle c\rangle=\langle d\rangle < \langle d_{k}\rangle
\]
for some $k$. In the first case, if both inequalities are strict we clearly find 
\begin{equation} \label{eq:contraredcont}
    c_1\preceq \ldots\preceq c_k \preceq d_k\preceq d \preceq c \preceq c_{k+1}\preceq d_{k+1}\preceq \ldots\preceq d_r,
\end{equation}
which is a contradiction to the global boundedness of $G(x)$ in view of Theorem~\ref{thm:Christol}. Still in the same case, if $\langle d_k\rangle = \langle c\rangle=\langle d\rangle$ then $c\leq d\leq d_k$ because $F^\rr(x)$ is reduced and $F(x)$ contracted, and we again arrive at \eqref{eq:contraredcont}. Analogously, if $\langle c\rangle=\langle d\rangle = \langle c_{k+1}\rangle$ then $c_{k+1}\leq c\leq d$ because of the same reason and we find \eqref{eq:contraredcont} once more.

In the second case we obtain $c_k\preceq d \preceq c \preceq d_{k}$. But then 
\[
    \lambda c_r\preceq \ldots \preceq \lambda c_{k+1}\preceq \lambda d_k\preceq \lambda d \preceq \lambda c \preceq \lambda c_{k}\preceq \ldots \preceq \lambda c_1\preceq \lambda d_r\in \Z
\]
for $\lambda=N-1$. Note that $\lambda d \preceq \lambda c$ because $\lambda d - \lambda c \in \N$ and $\lambda d_r$ is last in this ordering, as it is an integer. As $G(x)$ is globally bounded, this is again a contradiction to Theorem~\ref{thm:Christol}.
\end{proof}

\section{Examples} \label{sec:examples}


In this section we aim to provide numerous examples that showcase different phenomena that can occur regarding algebraicity of hypergeometric functions. We start with the two examples from the introduction (\cref{sec:intro}):

\begin{ex} \label{ex:first}
    Recall the example from the introduction in which a sequence $(u_n)_{n \geq 0}$ is defined by the recursion 
    \[
    (n+1)(n^2-2) u_{n+1} = 2(2n+1)(n^2+2n-1) u_n, \quad u_0=1.
    \]
    By definition this corresponds to the following representation of the generating function:
    \begin{equation*}
        \sum_{n\geq0} u_n x^n = \pFq{3}{2}{1/2,,\sqrt{2}+1,,-\sqrt{2}+1}{\sqrt{2},,-\sqrt{2}}{4x}. 
    \end{equation*}
    Applying \cref{thm:tree} to $f(x) = {}_{3}F_2([1/2,\sqrt{2}+1,-\sqrt{2}+1],[\sqrt{2},-\sqrt{2}];x)$, we observe that its contraction $f^{\cc}(x) = {}_{1}F_{0}([1/2],[\,];x)$ has rational parameters and is reduced. Trivially, the interlacing criterion holds for $f^{\cc}(x)$, hence the generating function of $(u_n)_{n\geq 0}$ is algebraic.
\end{ex}

\begin{ex} \label{ex:log}
    Also recall the examples from the introduction, where we set
    \[
        f(x) = \pFq{2}{1}{1,1}{2}{x} = -\frac{\log(1-x)}{x} \quad \text{ and } \quad g(x) = \pFq{2}{1}{2,2}{1}{x} = \frac{1+x}{(1-x)^3}.
    \]
    Here $f(x)$ is clearly transcendental and $g(x)$ is trivially algebraic. \cref{thm:BeHe} does not apply for either of these functions since they have top and bottom parameters with integral difference. However, $f(x)$ is already contracted but not reduced (hence transcendental by \cref{thm:tree}) and the contraction of $g(x)$ is given by the reduced function $g^{\cc}(x) = {}_{1}F_{0}([1],[\;];x) = (1-x)^{-1}$ (thus $g(x)$ is algebraic by \cref{thm:tree}).
\end{ex}

The following example is constructed to illustrate the application of the interlacing criterion after the contraction step.

\begin{ex} \label{ex:crazy}
    Let $(u_n)_{n\geq 0}$ be defined via $u_0 = 1$ and 
    \[
    u_{n+1} = \frac{\left(14 n +1\right) \left(14 n +3\right) \left(14 n +11\right) \left(n^{2}+2 n +4\right)}{56 \left(7 n +1\right) \left(7 n +3\right) \left(n +3\right) \left(n^{2}+3\right)} u_n. 
    \]
    We are interested in the nature of the generating function $f(x) = \sum_{n\geq0} u_n x^n$. One finds 
    \[
        f(x) = \bigF{\frac{1}{14}, \frac{3}{14}, \frac{11}{14},  1+i\sqrt{3}, 1-i\sqrt{3}}{\frac{1}{7}, \frac{3}{7}, i\sqrt{3}, -i\sqrt{3},3}{x} = \pFq{6}{5}{\frac{1}{14}, \frac{3}{14}, \frac{11}{14},  1+i\sqrt{3}, 1-i\sqrt{3},1}{\frac{1}{7},, \frac{3}{7},, i\sqrt{3},, -i\sqrt{3},,3}{x}.
    \]
    Note that the result of Beukers and Heckman \cite[Thm.~4.8]{BH89} (see \cref{thm:BeHe}) does not apply directly for $f(x)$ since some parameters are irrational numbers and also have integer differences. 
    
    Following \cref{thm:tree}, we first compute the contraction 
    \begin{equation} \label{eq:crazy_cont}
    f^{\cc}(x)= \bigF{\frac{1}{14}, \frac{3}{14}, \frac{11}{14}}{\frac{1}{7},\frac{3}{7},3}{x} = \pFq{4}{3}{\frac{1}{14}, \frac{3}{14}, \frac{11}{14}, 1}{\frac{1}{7}, \frac{3}{7},3}{x},
    \end{equation}
    which now clearly has rational parameters and is reduced.  
    
    We remark that \cref{thm:BeHe} cannot be applied to $f^{\cc}(x)$ either, since in the ${}_{4}F_3$ representation some of the parameters have integral difference, but Lemma~\ref{lem:BetterHe} applies. If the parameters $1$ and $3$ are ignored (corresponding to the application of the transformation~\eqref{eq:trivid} twice) then the parameters of $f^{\cc}(x)$ correspond to case No. 4 in \cite[Table~8.3]{BH89}, which shows the algebraicity of~$f(x)$.  
    
    Alternatively, it suffices, according to Figure~\ref{fig:tree}, to check the interlacing criterion for $f^{\cc}(x)$. Similarly to \cref{ex:BeHe}, looking at \eqref{eq:crazy_cont} we set $(a_1,a_2,a_3) = (1/14,3/14,11/14)$ and $(b_1,b_2,b_3) = (1/7,3/7,3)$. Then the interlacing criterion applied to $f^{\cc}(x)$ tells us to consider the six images below: For each $1 \leq \lambda \leq 14$ coprime to 14 we draw on the unit circle the points $\{\exp(2 \pi i \lambda a_j): j = 1,2,3\}$ in {\color{myred}red} and the points $\{\exp(2 \pi i \lambda b_k): k = 1,2,3\}$ in~{\color{blue} blue}.
    \[
    \BeHe{14}{1/14}{3/14}{11/14}{1/7}{3/7}
    \]
    Since for each of the pictures the blue and red points interlace, the interlacing criterion for $f^{\cc}(x)$ is satisfied and $f^{\cc}(x), f(x)$ are algebraic functions.
\end{ex}

The hypergeometric sequence in the following example appears in \cite[\S4.3.3]{Yurkevich23} and is related to the isoperimetric ratio of a conformal transformation of a torus with minor radius $r=1$ and major radius~$R>1$.
\begin{ex}
    For some $R>1$ consider the sequence $(u_n(R))_{n \geq0}$ given by
    \[
        u_{n+1}(R) = \frac{(2n-1)(2n+1)\cdot p_{n+1}(R)}{4(n+2)(n+1) \cdot p_n(R)} u_n(R), \quad u_0(R)=1,
    \]
    where 
    \[
        p_n(R) = 4(R^4 + 4R^2 - 4)n^3 + 6(R^4 + R^2 - 2)n^2 + (2R^4 - 13R^2 + 10)n - 3R^2 + 3.
    \]
    If $R$ is considered as a variable, $p_n(R)$ is irreducible over $\Q[n,R]$. If we denote by $\alpha_1(R),\alpha_2(R),\alpha_3(R) \in \overline{\Q(R)}$ the roots of $p_n(R) = 0$ with respect to $n$, then the generating function of $(u_n(R))_{n \geq0}$ is given by
    \[
        f_R(x) = \sum_{n \geq 0} u_n(R) x^n = \pFq{5}{4}{-1/2,1/2,\alpha_1(R)+1,\alpha_2(R)+1,\alpha_3(R)+1}{2,\alpha_1(R),\alpha_2(R),\alpha_3(R)}{x}.
    \]
    Note that for any $R \in \Q$ the series $f_R(x)$ is globally bounded. The contraction of $f_R(x)$ is given by 
    \[
        f_R^{\cc}(x) = \pFq{2}{1}{-1/2,1/2}{2}{x},
    \]
    which has rational parameters and is reduced but is not algebraic (does not satisfy IC). It follows that also $f_R(x)$ is never algebraic. 

    On the other hand, if we switch the top parameter $-1/2$ with bottom parameter $2$, i.e., consider the function
    \[
        g_R(x) = \sum_{n \geq 0} u_n(R) x^n = \pFq{5}{4}{1/2,2,\alpha_1(R)+1,\alpha_2(R)+1,\alpha_3(R)+1}{-1/2, \alpha_1(R),\alpha_2(R),\alpha_3(R)}{x},
    \]
    then $g_R^{\cc}(x) = \F([\,],[\,];x) = {}_{1}F_0([1],[\,];x) = 1/(1-x)$ which is trivially algebraic. Therefore, $g_R(x)$ is an algebraic function for any $R \in \Q$, even though it has irrational parameters.
\end{ex}

\begin{ex}
    Consider the sequence $(u_n)_{n \geq 0}$ given by the closed form expression 
    \begin{equation} \label{eq:uex}
        u_n = \frac{3}{2} \binom{4n}{n} \frac{n+2}{(n+1)(n+3)}.
    \end{equation}
    For its generating function $f(x) = \sum_{n\geq0} u_n x^n$ one finds that
    \[
        f(x) = \pFq{6}{5}{1/4, 1/2, 3/4, 3, 3, 1}{1/3,, 2/3,, 4,, 2,, 2}{\frac{256}{27}x} = 1+\frac{9}{4} x +\frac{56}{5} x^{2}+\frac{275}{4} x^{3} + \cdots.
    \]
    The contraction $f^{\cc}(x)$ is given by removing twice the pair $(3,2)$ from the ${}_{6}F_5$ above, i.e.,
    \[
        f^{\cc}(x) = \pFq{4}{3}{1/4, 1/2, 3/4, 1}{1/3,, 2/3,, 4}{\frac{256}{27}x} = \bigF{1/4, 1/2, 3/4}{1/3,, 2/3,, 4}{\frac{256}{27}x}.
    \]
    The interlacing criterion applied to the hypergeometric function with the parameters $(a_1,a_2,a_3) = (1/4,1/2,3/4)$ and $(b_1,b_2,b_3) = (1/3,2/3,4)$ shows\footnote{Alternatively note that this is case No. 1 in \cite[Table~8.3]{BH89}} that $f^{\cc}(x)$ and consequently $f(x)$ are algebraic functions.

    Now consider the sequence $(v_n)_{n \geq 0}$ given by
    \begin{equation}
        v_n = \frac{1}{2} \binom{4n}{n} \frac{n+2}{(n+1)^2},
    \end{equation}
    in other words, replace the factor $(n+3)$ by $(n+1)$ in \eqref{eq:uex} and normalize such that $v_0=1$. Now the generating function $g(x) = \sum_{n\geq0} v_n x^n$ is given by 
    \[
        f(x) = \pFq{6}{5}{1/4, 1/2, 3/4, 3, 1, 1}{1/3,, 2/3,, 2,, 2,, 2}{\frac{256}{27}x} = 1+\frac{3}{2} x +\frac{56}{9} x^{2}+\frac{275}{8} x^{3} + \cdots.
    \]
    For the contraction we only remove one pair $(3,2)$ since it is the only pair $(a_j,b_k)$ with $a_j-b_k \in \N$. It follows that the contraction of $f(x)$ is not reduced and consequently neither $f^{\cc}(x)$ nor $f(x)$ is algebraic.
\end{ex}    

We also provide an example from combinatorics in which the algebraicity of a hypergeometric series was overlooked for several years.
    
\begin{ex} \label{ex:Gessel1}
    The generating function for the number of Gessel excursions (i.e., walks of length $n$ in the quarter plane starting and ending at the origin and having step set $\mathfrak{S} = \{ \nearrow, \swarrow, \leftarrow, \rightarrow \}$) was first conjectured by I.~Gessel in 2001 and then proven by Kauers, Koutschan and Zeilberger \cite{KaKoZe09} in 2009 to be given by
    \begin{equation} \label{eq:Gesseldef}
        G(x)=\pFq{3}{2}{5/6, 1/2, 1}{5/3, 2}{16x^2}=\bigF{5/6, 1/2}{5/3, 2}{16x^2}.
    \end{equation}
    The first proof of this fact heavily relied on computer computations and the authors actually conjectured \cite[\S4]{KaKoZe09} that no purely human proof is possible -- this claim was, however, disproven by Bostan, Kurkova and Raschel \cite{BoKuRa17} a few years later.

    Quite surprisingly, the algebraicity of the function $G(x)$ was overlooked until Bostan and Kauers~\cite{BoKa10} proved the algebraicity of the trivariate complete generating function $Q(x,y,t)$ of Gessel walks ending at the point $(i,j) \in \N^2$ (note that $G(x) = Q(0,0,x)$). This is astonishing since one way to see why $G(x)$ is algebraic is to observe that
    \begin{equation} \label{eq:gesseltrans}
        G(x) = \frac{1}{2x^2} \left( \pFq{2}{1}{-1/2,-1/6}{2/3}{16x^2}-1 \right),
    \end{equation}
    and for this $_{2}F_{1}$ Schwarz' classification applies (see \cref{ex:Gessel0}). Equation \eqref{eq:gesseltrans} is, however, somewhat ad-hoc; at the same time, the present work provides a direct way to conclude algebraicity of $G(x)$ from \eqref{eq:Gesseldef}. 
    
    Note that the direct application of the interlacing criterion by Beukers and Heckman~\cite{BH89} (see~\cref{thm:BeHe}) to $G(x)$ is not possible since in the ${}_{3}F_2$ representation some of the parameters have integral difference. Indeed, \eqref{eq:gesseltrans} immediately implies that the corresponding hypergeometric differential equation is reducible.
    
    On the other hand, the hypergeometric function in \eqref{eq:Gesseldef} is contracted and even reduced, so \cref{lem:BetterHe} applies and it therefore indeed holds that $G(x)$ is algebraic if and only if the corresponding hypergeonetric function satisfies the interlacing criterion (with parameters $(a_1,a_2) = (5/6,1/2)$ and $(b_1,b_2) = (5/3,2)$). Below, for $\lambda \in \{1,5\}$ the values of $\exp(2\pi i \lambda a_1), \exp(2\pi i \lambda a_2)$ are drawn in red and those of $\exp(2\pi i \lambda b_1),\exp(2\pi i \lambda b_2)$ are drawn in blue. One notices that the values indeed interlace on the unit circle and concludes that $G(x)$ is an algebraic function.
        \[
        \resizebox{0.4\textwidth}{!}{\BeHe{6}{5/6}{1/2}{1}{5/3}{2}}
        \]
    Applying our main result, \cref{thm:tree} (see also Figure~\ref{fig:tree}), directly to \eqref{eq:Gesseldef} also immediately reduces the algebraicity question to the interlacing criterion for the reduced hypergeometric function $\F([5/6,1/2],[5/3,2];x)$.
\end{ex}



\bibliographystyle{alphaabbr}
\bibliography{alg}

\textsc{Faculty of Mathematics, University of Vienna, Oskar-Morgenstern-Platz 1, 1090, Vienna, Austria}

\textit{Email: }\href{mailto:florian.fuernsinn@univie.ac.at}{\texttt{florian.fuernsinn@univie.ac.at}}\\

\textsc{Team CSAI, A\&R TECH GmbH, Dietz-von-Weidenberg-Gasse 2, 1210, Vienna, Austria}

\textsc{Faculty of Mathematics, University of Vienna, Oskar-Morgenstern-Platz 1, 1090, Vienna, Austria}

\textsc{Team MathExp, Inria Saclay Île-de-France, Bât. Alan Turing, 1 rue Honoré d'Estienne d'Orves, 91120, Palaiseau, France }

\textit{Email: }\href{mailto:sergey.yurkevich@univie.ac.at}{\texttt{sergey.yurkevich@univie.ac.at}}
\end{document}